\numberwithin{equation}{section}
\numberwithin{equation}{section}
\theoremstyle{plain}
\newtheorem{theorem}[equation]{Theorem}
\newtheorem{conjecture}[equation]{Conjecture}
\newtheorem{lemma}[equation]{Lemma}
\newtheorem{corollary}[equation]{Corollary}
\newtheorem{proposition}[equation]{Proposition}
\theoremstyle{definition}
\newtheorem{definition}[equation]{Definition}
\newtheorem{example}[equation]{Example}
\newtheorem{remark}[equation]{Remark}
\theoremstyle{remark}
\newcommand{\R}{\mathbb{R}}
\newcommand{\C}{\mathbb{C}}
\newcommand{\B}{\mathbb{B}}
\newcommand{\uhp}{\mathbb{H}}
\newcounter{alphabet}
\newcounter{minutes}\setcounter{minutes}{\time}
\newcounter{hours}\setcounter{hours}{\time}
\begin{document}
\bibliographystyle{amsplain}
\title[Intrinsic quasi-metrics]
{
Intrinsic quasi-metrics
}

\def\thefootnote{}
\footnotetext{
\texttt{\tiny File:~\jobname .tex,
          printed: \number\year-\number\month-\number\day,
          \thehours.\ifnum\theminutes<10{0}\fi\theminutes}
}
\makeatletter\def\thefootnote{\@arabic\c@footnote}\makeatother

\author[O. Rainio]{Oona Rainio}
\address{Department of Mathematics and Statistics, University of Turku, FI-20014 Turku, Finland}
\email{ormrai@utu.fi}

\keywords{Hyperbolic geometry, intrinsic geometry, intrinsic metrics, quasi-metrics, triangular ratio metric.}
\subjclass[2010]{Primary 51M10; Secondary 51M16}
\begin{abstract}
The point pair function $p_G$ defined in a domain $G\subsetneq\mathbb{R}^n$ is shown to be a quasi-metric and its other properties are studied. For a convex domain $G\subsetneq\mathbb{R}^n$, a new intrinsic quasi-metric called the function $w_G$ is introduced. Several sharp results are established for these two quasi-metrics, and their connection to the triangular ratio metric is studied.
\end{abstract}
\maketitle

\section{Introduction}

In geometric function theory, one of the key concepts is an \emph{intrinsic} distance. This notion means a distance between two points fixed in a domain that not only depends on how close these points are to each other but also takes into account how they are located with respect to the boundary of the domain. A well-known example of an intrinsic metric is the \emph{hyperbolic metric} \cite{bm} but, especially during the past thirty years, numerous other \emph{hyperbolic type metrics} have been introduced, see \cite{chkv, fmv, hkvbook, h, imsz, ms, inm}.

This often raises the question about the reason for introducing new metrics and studying them instead of just focusing on those already existing. To answer this, it should be first noted that the slightly different definitions of the intrinsic metrics mean that they have unique advantages and suit for diverse purposes. Consequently, new metrics can be used to discover various intricate features of geometric entities that would not be detected with some other metrics. For instance, many hyperbolic type metrics behave slightly differently under quasiregular mappings and analysing these differences can give us a better understanding of how such mappings distort distances \cite{ps}.

Furthermore, new metrics can also bring information about the already existing metrics. Calculating the exact value of the hyperbolic metric in a domain that cannot be mapped onto the unit disk with a conformal mapping is often impossible but we can estimate it by using other intrinsic metrics with simpler definitions \cite[Ch. 4.3, pp. 59-66]{hkvbook}. However, in order to do this, we need to know the connection between the different metrics considered and to be able to create upper and lower bounds for them. Finding sharp inequalities for intrinsic metrics can often help us with some related applications and, for instance, in the estimation of condenser capacities \cite[Ch. 9, pp. 149-172]{hkvbook}.

Another noteworthy motivation for studying several different metrics is that their inequalities can tell us more about the domain where the metrics are defined. The definition for a uniform domain can be expressed with an inequality between the quasihyperbolic metric and the distance ratio metric as in \cite[Def. 6.1, p. 84]{hkvbook}. Similarly, some other inequalities can be used to determine whether the domain is, for instance, convex or not, like in Theorem \ref{sp_convex} below. Further, Corollary \ref{cor_halfspace} even shows an equality between metrics that serves as a condition for when the domain being a half-space.

In this paper, we consider two different intrinsic quasi-metrics. By a \emph{quasi-metric}, we mean a function that fulfills all the conditions of a metric otherwise but only a relaxed version of the triangle inequality instead of the inequality itself holds for this function, see Definition \ref{def_metric} and the inequality \eqref{quasi_inequality}. The first quasi-metric considered is the point pair function introduced by Chen et al. in 2015 \cite{chkv}, and the other quasi-metric is a function defined for the first time in Definition \ref{def_wcon} in this paper. We also study the triangular ratio metric introduced by P. H\"ast\"o in 2002 \cite{h} for one of the main results of this paper is showing how our new quasi-metric can be used to create a very good lower bound for this metric, especially in the case where the domain is the unit disk.

The structure of this paper is as follows. In Section \ref{sct3}, we study the properties of the point pair function and show how it can be used together with the triangular ratio metric to give us new information about the shape of the domain. Then, in Section \ref{sct4}, we introduce a new quasi-metric and show how it works as a lower bound for the triangular ratio metric in every convex domain. In Section \ref{sct5}, we focus on the case of the unit disk and find several sharp inequalities between different hyperbolic type metrics and quasi-metrics. Especially, we investigate how the new quasi-metric can be used to estimate the value of the triangular ratio metric in the unit disk, see Theorem \ref{thm_ws} and Conjecture \ref{con_ws}. 

{\bf Acknowledgements.} This research continues my work with Professor Matti Vuorinen in \cite{inm, sinb, sqm}. I am indebted to him for all guidance and other support. My research was also supported by Finnish Concordia Fund. Furthermore, I would like to thank the referees for their constructive suggestions and corrections.

\section{Preliminaries}

In this section, we will introduce the necessary definitions and some basic results related to them but let us first recall the definition of a metric.

\begin{definition}\label{def_metric}
For any non-empty space $G$, a \emph{metric} is a function $d:G\times G\to[0,\infty)$ that fulfills the following three conditions for all $x,y,z\in G$:\\
(1) Positivity: $d(x,y)\geq 0$, and $d(x,y)=0$ if and only if $x=y$,\\
(2) Symmetry: $d(x,y)=d(y,x)$,\\
(3) Triangle inequality: $d(x,y)\leq d(x,z)+d(z,y).$
\end{definition}

A \emph{quasi-metric} is a function $d$ that fulfills the definition above otherwise, but instead of the triangle inequality itself, it only fulfills the inequality
\begin{align}\label{quasi_inequality}
d(x,y)\leq c(d(x,z)+d(z,y))    
\end{align}
with some constant $c>1$ independent of the points $x,y,z$.

Now, let us introduce the notations used. Suppose that $G\subsetneq\R^n$ is some domain. For all $x\in G$, the Euclidean distance $d(x,\partial G)=\inf\{|x-z|\text{ }|\text{ }z\in\partial G\}$ will be denoted by $d_G(x)$. The Euclidean balls and spheres are written as $B^n(x,r)=\{y\in \R^n\text{ }|\text{ }|x-y|<r\}$, $\overline{B}^n(x,r)=\{y\in \R^n\text{ }|\text{ }|x-y|\leq r\}$ and $S^{n-1}(x,r)=\{y\in \R^n\text{ }|\text{ }|x-y|=r\}$. If $x$ or $r$ is not specified otherwise, suppose that $x=0$ and $r=1$. For points $x,y\in\R^n$, the Euclidean line passing through them is denoted by $L(x,y)$, the line segment between them by $[x,y]$ and the value of the smaller angle with vertex in the origin and $x,y$ on its sides by $\measuredangle XOY$. Furthermore, $\{e_1,...,e_n\}$ is the set of the unit vectors.

In this paper, we focus on the cases where the domain $G$ is either the upper half-space $\uhp^n=\{(x_1,...,x_n)\in\R^n\text{ }|\text{ }x_n>0\}$, the unit ball $\B^n=B^n(0,1)$ or the open sector $S_\theta=\{x\in\C\text{ }|\text{ }0<\arg(x)<\theta\}$ with an angle $\theta\in(0,2\pi)$. The hyperbolic metric can be defined in these cases with the formulas
\begin{align*}
\text{ch}\rho_{\uhp^n}(x,y)&=1+\frac{|x-y|^2}{2d_{\uhp^n}(x)d_{\uhp^n}(y)},\quad x,y\in\uhp^n,\\
\text{sh}^2\frac{\rho_{\B^n}(x,y)}{2}&=\frac{|x-y|^2}{(1-|x|^2)(1-|y|^2)},\quad x,y\in\B^n,\\
\rho_{S_\theta}(x,y)&=\rho_{\uhp^2}(x^{\pi\slash\theta},y^{\pi\slash\theta}),\quad x,y\in S_\theta,
\end{align*}
see \cite[(4.8), p. 52 \& (4.14), p. 55]{hkvbook}. In the two-dimensional unit disk, we can simply write
\begin{align*}
\text{th}\frac{\rho_{\B^2}(x,y)}{2}=\left|\frac{x-y}{1-x\overline{y}}\right|,
\end{align*}
where $\overline{y}$ is the complex conjugate of $y$.

For any domain $G\subsetneq\R^n$, define the following hyperbolic type metrics and quasi-metric:\newline
$(1)$ \cite[(1.1), p. 683]{chkv} The triangular ratio metric $s_G:G\times G\to[0,1],$ 
\begin{align*}
s_G(x,y)=\frac{|x-y|}{\inf_{z\in\partial G}(|x-z|+|z-y|)}, 
\end{align*}
$(2)$ \cite[2.2, p. 1123 \& Lemma 2.1, p. 1124]{hvz} the $j^*_G$-metric $j^*_G:G\times G\to[0,1],$
\begin{align*}
j^*_G(x,y)=\frac{|x-y|}{|x-y|+2\min\{d_G(x),d_G(y)\}},    
\end{align*}
$(3)$ \cite[p. 685]{chkv}, \cite[2.4, p. 1124]{hvz} the point pair function $p_G:G\times G\to[0,1],$
\begin{align*}
p_G(x,y)=\frac{|x-y|}{\sqrt{|x-y|^2+4d_G(x)d_G(y)}}.   
\end{align*}

\begin{remark}\label{rmk_invariantmetrics}
All three functions listed above are invariant under all similarity maps. In particular, when defined in a sector $S_\theta$, they are invariant under a reflection over the bisector of the sector and a stretching $x\mapsto r\cdot x$ with any $r>0$. Consequently, this allows us to make certain assumptions when choosing the points $x,y\in S_\theta$ to study these functions. 
\end{remark}

The metrics introduced above fulfill the following inequalities.

\begin{lemma}\label{lem_jsp_bounds}
\emph{\cite[Lemma 2.1, p. 1124; Lemma 2.2 \& Lemma 2.3, p. 1125 \& Thm 2.9(1), p. 1129]{hvz}} 
For any subdomain $G\subsetneq\R^n$ and all $x,y\in G$, the following inequalities hold:\newline
$(1)$ $j^*_G(x,y)\leq p_G(x,y)\leq\sqrt{2}j^*_G(x,y)$,\newline
$(2)$ $j^*_G(x,y)\leq s_G(x,y)\leq2j^*_G(x,y)$.\newline
Furthermore, if $G$ is convex, then for all $x,y\in G$\newline 
$(3)$ $s_G(x,y)\leq\sqrt{2}j^*_G(x,y)$.
\end{lemma}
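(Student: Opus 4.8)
The plan is to reduce all three statements to elementary inequalities by writing $a=|x-y|$, $m=\min\{d_G(x),d_G(y)\}$ and $P=d_G(x)d_G(y)$, and by invoking two geometric facts: the distance function $d_G$ is $1$-Lipschitz, i.e. $|d_G(x)-d_G(y)|\le|x-y|$, and every $x\in G$ admits a (nearly) nearest boundary point $z_0$ with $|x-z_0|\approx d_G(x)$. Throughout I may assume $d_G(x)\le d_G(y)$, so $m=d_G(x)$.

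For part (1), both inequalities are, after clearing denominators and squaring, polynomial inequalities in $a,m,P$. The bound $j^*_G\le p_G$ is equivalent to $\sqrt{a^2+4P}\le a+2m$, which upon squaring becomes $P\le m(a+m)$; writing $P=m\cdot\max\{d_G(x),d_G(y)\}$ this is precisely the Lipschitz estimate $\max\{d_G(x),d_G(y)\}\le m+a$. The bound $p_G\le\sqrt2\,j^*_G$ is equivalent to $(a+2m)^2\le2(a^2+4P)$, and since $P\ge m^2$ it suffices to verify $(a+2m)^2\le2a^2+8m^2$, i.e. $(a-2m)^2\ge0$. So part (1) is purely computational once the Lipschitz property is used.

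For part (2), I bound $I=\inf_{z\in\partial G}(|x-z|+|z-y|)$ from both sides. For $j^*_G\le s_G$ I need $I\le a+2m$: taking a near-optimal boundary point $z_0$ for $x$ and using $|z_0-y|\le|z_0-x|+|x-y|$ gives $|x-z_0|+|z_0-y|\le2m+a$. For $s_G\le2j^*_G$ I need $I\ge(a+2m)/2$: every $z\in\partial G$ satisfies both $|x-z|+|z-y|\ge|x-y|=a$ and $|x-z|+|z-y|\ge d_G(x)+d_G(y)\ge2m$, hence the sum is at least the average $(a+2m)/2$ of these two lower bounds.

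The substance lies in part (3), and my plan is to prove the stronger inequality $s_G\le p_G$ for convex $G$ and then quote part (1). Convexity enters through supporting hyperplanes: for each $z\in\partial G$ choose a supporting hyperplane $T_z$ with inward unit normal $\nu$, so that $G$ lies in the closed half-space it bounds; since the open ball $B^n(x,d_G(x))\subseteq G$ lies in that half-space, the signed distances $\delta_x=\langle x-z,\nu\rangle$ and $\delta_y=\langle y-z,\nu\rangle$ satisfy $\delta_x\ge d_G(x)$ and $\delta_y\ge d_G(y)$. Reflecting $y$ across $T_z$ to $y'$ preserves $|z-y|$, so the triangle inequality gives $|x-z|+|z-y|=|x-z|+|z-y'|\ge|x-y'|$, and splitting orthogonally relative to $\nu$ yields $|x-y'|^2=|x-y|^2-(\delta_x-\delta_y)^2+(\delta_x+\delta_y)^2=a^2+4\delta_x\delta_y\ge a^2+4P$. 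Taking the infimum over $z$ gives $I\ge\sqrt{a^2+4P}$, i.e. $s_G\le p_G$, and combining with part (1) completes the proof. I expect the reflection-plus-supporting-hyperplane step to be the crux: it is where convexity is genuinely used and where the constant sharpens from $2$ to $\sqrt2$, and the point to handle carefully is that the identity $|x-y'|^2=a^2+4\delta_x\delta_y$ holds uniformly in $z$, so no separate analysis of the extremal boundary point is required.
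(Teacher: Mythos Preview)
Your proof is correct. Note, however, that the paper does not prove this lemma: it is quoted from \cite{hvz} (and the ``only if'' direction of Theorem~\ref{sp_convex}, $s_G\le p_G$ for convex $G$, is likewise quoted from \cite{sqm}), so there is no in-paper argument to compare against.

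Your approach is the standard one and matches what one finds in the cited sources. A couple of remarks. In part~(2), the lower bound $I\ge(a+2m)/2$ follows more simply from $\max(a,2m)\ge(a+2m)/2$, which is how you argue it; good. In part~(3) you in fact prove the sharper inequality $s_G\le p_G$ for convex $G$---exactly the content of Theorem~\ref{sp_convex}---and then derive (3) via part~(1); this is cleaner than proving $s_G\le\sqrt2\,j^*_G$ directly and recovers two cited results at once. The supporting-hyperplane reflection step is carried out correctly: the key inclusions $B^n(x,d_G(x))\subseteq G\subseteq H$ indeed force $\delta_x\ge d_G(x)$, and your orthogonal decomposition yields the identity $|x-y'|^2=|x-y|^2+4\delta_x\delta_y$ without further case analysis.
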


\begin{lemma}\label{rhojsp_inG}
\emph{\cite[p. 460]{hkvbook}} For all $x,y\in G\in\{\uhp^n,\B^n\}$,
\begin{align*}
&(1)\quad{\rm th}\frac{\rho_{\uhp^n}(x,y)}{4}\leq j^*_{\uhp^n}(x,y)\leq s_{\uhp^n}(x,y)= p_{\uhp^n}(x,y)={\rm th}\frac{\rho_{\uhp^n}(x,y)}{2}\leq2{\rm th}\frac{\rho_{\uhp^n}(x,y)}{4},\\
&(2)\quad{\rm th}\frac{\rho_{\B^n}(x,y)}{4}\leq j^*_{\B^n}(x,y)\leq s_{\B^n}(x,y)\leq p_{\B^n}(x,y)\leq{\rm th}\frac{\rho_{\B^n}(x,y)}{2}\leq2{\rm th}\frac{\rho_{\B^n}(x,y)}{4}.
\end{align*}
\end{lemma}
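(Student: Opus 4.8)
The plan is to treat the two central (in)equalities relating $p_G$ to $\text{th}(\rho_G/2)$ and to $s_G$ by direct computation, and to obtain the two outer bounds from standard manipulations of $\text{th}$ together with the comparison of the hyperbolic and distance-ratio metrics. First I would pin down $\text{th}(\rho_G/2)$ explicitly. From the given formula for $\text{ch}\,\rho_{\uhp^n}$ and the identity $\text{th}^2(\rho/2)=(\text{ch}\,\rho-1)/(\text{ch}\,\rho+1)$ one obtains $\text{th}^2(\rho_{\uhp^n}(x,y)/2)=|x-y|^2/(|x-y|^2+4d_{\uhp^n}(x)d_{\uhp^n}(y))$, which is exactly $p_{\uhp^n}(x,y)^2$; hence $p_{\uhp^n}=\text{th}(\rho_{\uhp^n}/2)$. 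For the ball, the formula for $\text{sh}^2(\rho_{\B^n}/2)$ gives $\text{th}^2(\rho_{\B^n}(x,y)/2)=|x-y|^2/(|x-y|^2+(1-|x|^2)(1-|y|^2))$. Comparing with $p_{\B^n}^2$ and using $(1-|x|^2)(1-|y|^2)=(1-|x|)(1-|y|)(1+|x|)(1+|y|)\le 4(1-|x|)(1-|y|)=4d_{\B^n}(x)d_{\B^n}(y)$ shows the denominator of $\text{th}^2(\rho/2)$ is the smaller one, so $p_{\B^n}\le\text{th}(\rho_{\B^n}/2)$.

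Next I would handle $s_G$. For $\uhp^n$ the infimum $\inf_z(|x-z|+|z-y|)$ over the bounding hyperplane is computed by reflecting $y$ to $y^*=(y_1,\dots,y_{n-1},-y_n)$; since $|z-y|=|z-y^*|$ for $z\in\partial\uhp^n$, the infimum equals $|x-y^*|=\sqrt{|x-y|^2+4d_{\uhp^n}(x)d_{\uhp^n}(y)}$, giving the equality $s_{\uhp^n}=p_{\uhp^n}$. For the ball the analogous infimum is not attained so cleanly, but I only need the lower bound $\inf_{z\in S^{n-1}}(|x-z|+|z-y|)\ge\sqrt{|x-y|^2+4d_{\B^n}(x)d_{\B^n}(y)}$, equivalently $s_{\B^n}\le p_{\B^n}$. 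Writing $u=x-z$, $v=y-z$ and using $(|x-z|+|z-y|)^2-|x-y|^2=2(|u||v|+u\cdot v)$, I would prove the pointwise estimate $|u||v|+u\cdot v\ge 2(1-x\cdot z)(1-y\cdot z)$ for every $z\in S^{n-1}$ by splitting $u,v$ into their components along and orthogonal to $z$ (so $|u|=\sqrt{p^2+s^2}$ with $p=1-x\cdot z$, etc.) and applying Cauchy--Schwarz to $|u||v|=\sqrt{(p^2+s^2)(q^2+t^2)}\ge pq+st$ while $u\cdot v\ge pq-st$. Since $1-x\cdot z\ge 1-|x|=d_{\B^n}(x)$ and similarly for $y$, this yields $|u||v|+u\cdot v\ge 2d_{\B^n}(x)d_{\B^n}(y)$ at every boundary point, hence the claim. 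This Cauchy--Schwarz step is the crux of the ball case.

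Finally, the outer bounds. The middle inequality $j^*_G\le s_G$ is Lemma \ref{lem_jsp_bounds}(2). For $\text{th}(\rho/2)\le 2\,\text{th}(\rho/4)$ I would set $w=\text{th}(\rho/4)$ and use the duplication formula $\text{th}(\rho/2)=2w/(1+w^2)\le 2w$. For $\text{th}(\rho/4)\le j^*_G$ I would bring in the distance ratio metric $j_G(x,y)=\log(1+|x-y|/\min\{d_G(x),d_G(y)\})$, for which a direct computation gives $j^*_G=\text{th}(j_G/2)$; by monotonicity of $\text{th}$ the bound is then equivalent to $\rho_G\le 2j_G$, the standard sharp comparison between the hyperbolic and distance-ratio metrics in $\uhp^n$ and $\B^n$. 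In the half-space this can also be verified by hand from the explicit expression $\text{th}(\rho_{\uhp^n}/4)=|x-y|/(\sqrt{|x-y|^2+4d_{\uhp^n}(x)d_{\uhp^n}(y)}+2\sqrt{d_{\uhp^n}(x)d_{\uhp^n}(y)})$, comparing denominators via $\sqrt{d_{\uhp^n}(x)d_{\uhp^n}(y)}\ge\min\{d_{\uhp^n}(x),d_{\uhp^n}(y)\}$.

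I expect the comparison $\rho_G\le 2j_G$ in the ball to be the only ingredient that is not essentially a one-line computation, and would either invoke it as a known result or derive it separately; every other step reduces either to an algebraic identity for $\text{th}$ or to the reflection and Cauchy--Schwarz arguments above.
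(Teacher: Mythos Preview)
The paper does not actually prove this lemma: it is stated with a bare citation to \cite[p.~460]{hkvbook} and no argument is given. So there is no ``paper's own proof'' to compare against; what you have supplied is a genuine proof where the paper merely quotes.

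Your argument is correct. The identifications $p_{\uhp^n}=\text{th}(\rho_{\uhp^n}/2)$ and the formula for $\text{th}(\rho_{\B^n}/2)$ follow exactly as you say from the half-angle identities; the reflection computation for $s_{\uhp^n}=p_{\uhp^n}$ is standard; and your Cauchy--Schwarz argument for $s_{\B^n}\le p_{\B^n}$ is clean and works as written (the key point being that $p=1-x\cdot z>0$ and $q=1-y\cdot z>0$ since $|x|,|y|<1$, so the two applications of Cauchy--Schwarz combine to give $|u||v|+u\cdot v\ge 2pq\ge 2d_{\B^n}(x)d_{\B^n}(y)$). The identity $j^*_G=\text{th}(j_G/2)$ is correct, reducing $\text{th}(\rho_G/4)\le j^*_G$ to $\rho_G\le 2j_G$; you are right that in $\B^n$ this last inequality is the one genuinely non-elementary ingredient and is normally cited (it appears e.g.\ in \cite{hkvbook}). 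Your explicit half-space verification of $\text{th}(\rho_{\uhp^n}/4)\le j^*_{\uhp^n}$ via $\sqrt{d_{\uhp^n}(x)d_{\uhp^n}(y)}\ge\min\{d_{\uhp^n}(x),d_{\uhp^n}(y)\}$ is also fine. In short: your write-up supplies what the paper omits, and the only external input you still need is the classical bound $\rho_{\B^n}\le 2j_{\B^n}$, exactly as you flagged.
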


Furthermore, the following results are often needed in order to calculate the value of the triangular ratio metric in the unit disk.

\begin{lemma}\label{lem_smetricinB_collinear}
\emph{\cite[11.2.1(1) p. 205]{hkvbook}}
For all $x,y\in\B^n$,
\begin{align*}
s_{\B^n}(x,y)\leq\frac{|x-y|}{2-|x+y|},    
\end{align*}
where the equality holds if the points $x,y$ are collinear with the origin.
\end{lemma}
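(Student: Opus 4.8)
The plan is to unwind the definition of $s_{\B^n}$ and reduce the claimed inequality to a purely geometric lower bound on the sum $|x-z|+|z-y|$ as $z$ ranges over the boundary sphere $\partial\B^n=S^{n-1}$. Since $s_{\B^n}(x,y)=|x-y|/\inf_{z\in S^{n-1}}(|x-z|+|z-y|)$ and the numerator is fixed, the inequality $s_{\B^n}(x,y)\le |x-y|/(2-|x+y|)$ is equivalent to the estimate
\begin{align}\label{plan_eq}
\inf_{z\in S^{n-1}}\big(|x-z|+|z-y|\big)\ge 2-|x+y|.
\end{align}

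The key step is a projection inequality: for any unit vector $z$, the length $|z-x|$ dominates the signed length of the projection of $z-x$ onto $z$ itself, giving $|x-z|=|z-x|\ge (z-x)\cdot z=1-x\cdot z$, and likewise $|z-y|\ge 1-y\cdot z$ (both right-hand sides are in fact positive since $|x|,|y|<1$). Adding these two estimates and bounding $(x+y)\cdot z\le|x+y|\,|z|=|x+y|$ by Cauchy--Schwarz yields
\begin{align*}
|x-z|+|z-y|\ge 2-(x+y)\cdot z\ge 2-|x+y|
\end{align*}
for every $z\in S^{n-1}$, which is exactly \eqref{plan_eq}. Taking the infimum over $z$ and dividing into $|x-y|$ gives the desired bound. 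I expect this projection step to be the only real idea; everything around it is bookkeeping, and the naive bound $|x-z|+|z-y|\ge 2-|x|-|y|$ is too weak, so the point is precisely to keep $|x+y|$ rather than $|x|+|y|$.

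For the equality assertion, suppose $x,y$ are collinear with the origin, so $x=au$ and $y=bu$ for some unit vector $u$ and scalars $a,b\in(-1,1)$. I would choose $z$ to point in the direction of $x+y$, i.e.\ $z=\sigma u$ with $\sigma=1$ if $a+b\ge0$ and $\sigma=-1$ otherwise (the choice being irrelevant when $x+y=0$). For this $z$ both inequalities above become equalities: first $(x+y)\cdot z=(a+b)\sigma=|a+b|=|x+y|$ by construction, and second $|x-z|=|a-\sigma|=1-a\sigma=1-x\cdot z$ and similarly $|z-y|=1-y\cdot z$, using $a\sigma,b\sigma\in(-1,1)$. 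Hence the infimum in \eqref{plan_eq} is attained and equals $2-|x+y|$, so $s_{\B^n}(x,y)=|x-y|/(2-|x+y|)$.
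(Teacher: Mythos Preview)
Your argument is correct. The projection bound $|x-z|\ge(z-x)\cdot z=1-x\cdot z$ for $|z|=1$, combined with Cauchy--Schwarz, cleanly yields $|x-z|+|z-y|\ge 2-|x+y|$, and your treatment of the equality case is accurate (the check that $|a-\sigma|=1-a\sigma$ works precisely because $a\sigma\in(-1,1)$).

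As for comparison: the paper does not give its own proof of this lemma. It is quoted as \cite[11.2.1(1), p.~205]{hkvbook} and used as a black box, so there is no in-paper argument to compare against. Your proof is a self-contained and efficient justification of the cited result.
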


\begin{theorem}\label{smetricinB_forconjugate}
\emph{\cite[Thm 3.1, p. 276]{hkvz}} If $x=h+ki\in\B^2$ with $h,k>0$, then
\begin{align*}
s_{\B^2}(x,\overline{x})&=|x|\text{ if }|x-\frac{1}{2}|>\frac{1}{2},\\
s_{\B^2}(x,\overline{x})&=\frac{k}{\sqrt{(1-h)^2+k^2}}\leq|x|\text{ otherwise.}
\end{align*}
\end{theorem}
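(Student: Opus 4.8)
The plan is to reduce the statement to a one–variable minimisation on the unit circle. Since $\overline{x}=h-ki$ we have $|x-\overline{x}|=2k$, so by the definition of $s_{\B^2}$,
\[
s_{\B^2}(x,\overline{x})=\frac{2k}{\min_{z\in S^1}\bigl(|x-z|+|z-\overline{x}|\bigr)},
\]
and everything reduces to computing $D:=\min_{z\in S^1}f(z)$ for $f(z)=|x-z|+|z-\overline{x}|$. I would parametrise $z=(\cos\phi,\sin\phi)$ and expand the two squared distances; they share the common part $A:=|x|^2+1-2h\cos\phi$ and differ only in the sign of the term $2k\sin\phi$, which yields the convenient closed form $f(\phi)^2=2A+2\sqrt{A^2-4k^2\sin^2\phi}$. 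This is the quantity I would differentiate.

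Differentiating $f^2$ and factoring out $\sin\phi$ splits the critical points into the endpoints $\phi=0,\pi$ (that is, $z=\pm1$) and the zeros of the remaining factor. For the latter I would clear the square root and square; the resulting quadratic in $\cos\phi$ is $|x|^2\cos^2\phi-h(|x|^2+1)\cos\phi+h^2=0$, with roots $\cos\phi=h/|x|^2$ and $\cos\phi=h$. The sign condition produced by the squaring discards $\cos\phi=h$ as extraneous, leaving the single interior critical direction $\cos\phi=h/|x|^2$; this corresponds to an actual point of $S^1$ exactly when $h\le|x|^2$, i.e.\ precisely when $|x-\tfrac12|\ge\tfrac12$.

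It then remains to evaluate and compare $f$ at the surviving critical points. At $z=1$ one finds $f=2\sqrt{(1-h)^2+k^2}$, at $z=-1$ the larger value $2\sqrt{(1+h)^2+k^2}$ (a maximum), and substituting the critical relation back at the interior point collapses $f^2$ to $4k^2\cos\phi/h=4k^2/|x|^2$, so $f=2k/|x|$ there. Consequently, when $|x-\tfrac12|<\tfrac12$ the interior critical point is absent and the minimum is forced onto $z=1$, giving $s_{\B^2}(x,\overline{x})=k/\sqrt{(1-h)^2+k^2}$; when $|x-\tfrac12|>\tfrac12$ the interior point is present and should be the global minimum, giving $s_{\B^2}(x,\overline{x})=|x|$.

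The single genuine obstacle is to confirm these two ``should be'' claims rigorously: that in the latter regime the interior point really beats $z=1$, i.e.\ $2k/|x|\le2\sqrt{(1-h)^2+k^2}$, and that in the former regime the asserted bound $s_{\B^2}(x,\overline{x})\le|x|$ holds. Pleasantly, after clearing denominators both inequalities reduce to the single identity $(h-|x|^2)^2\ge0$, with equality exactly on the circle $|x-\tfrac12|=\tfrac12$; this also shows that the two formulas agree there, so they match continuously across the case boundary and the argument is complete.
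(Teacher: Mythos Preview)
The paper does not prove this statement; it is quoted from \cite[Thm~3.1]{hkvz} as a known result in the preliminaries, so there is no in-paper argument to compare against. Your direct calculus approach is a valid self-contained proof.

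Your computations check out: with $A=|x|^2+1-2h\cos\phi$ one indeed has $f(\phi)^2=2A+2\sqrt{A^2-4k^2\sin^2\phi}$, and setting the $\phi$-derivative to zero and clearing the radical gives exactly the quadratic $|x|^2\cos^2\phi-h(|x|^2+1)\cos\phi+h^2=0$ with roots $\cos\phi=h$ and $\cos\phi=h/|x|^2$. The sign check (the pre-squaring equation reads $h\sqrt{\,\cdot\,}=2k^2\cos\phi-hA$, and at $\cos\phi=h$ the right side equals $h(|x|^2-1)<0$) correctly eliminates the first root. At the surviving root the identity $\sqrt{A^2-4k^2\sin^2\phi}=2k^2\cos\phi/h-A$ collapses $f^2$ to $4k^2/|x|^2$ as you claim. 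Finally, the single algebraic fact
\[
|x|^2\bigl((1-h)^2+k^2\bigr)-k^2=h^2-2h|x|^2+|x|^4=(h-|x|^2)^2\ge 0
\]
simultaneously shows that the interior critical value $2k/|x|$ never exceeds the endpoint value $2|x-1|$ and that the second-case formula is bounded above by $|x|$, with equality on $|x-\tfrac12|=\tfrac12$. Since $f$ is smooth on $S^1$ (the radicand $A^2-4k^2\sin^2\phi=|x-z|^2\,|\overline x-z|^2$ never vanishes for $|x|<1$), the global minimum is attained at one of the finitely many critical points you listed, and the case comparison finishes the argument. The proof is complete as written; the only cosmetic addition would be the one-line justification that $z=-1$ is a global maximum (immediate since $h>0$), which you already note.
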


\section{Point pair function}\label{sct3}

In this section, we will focus on the point pair function. The expression for this function was first introduced in \cite[p. 685]{chkv}, but it was named and researched further in \cite{hvz}. It was noted already in \cite[Rmk 3.1 p. 689]{chkv} that the point pair function defined in the unit disk is not a metric because it does not fulfill the triangle inequality for all points of this domain. However, the point pair function offers a good upper bound for the triangular ratio metric in convex domains \cite[Lemma 11.6(1), p. 197]{hkvbook} and, by Lemma \ref{rhojsp_inG}, it also serves as a lower bound for the expression ${\rm th}(\rho_G(x,y)\slash2)$ if $G\in\{\uhp^n,\B^n\}$ so studying its properties more carefully is relevant.

It is very easy to show that there is a constant $c>1$ with which the point pair function is a quasi-metric.

\begin{lemma}
For all domains $G\subsetneq\R^n$, the point pair function $p_G$ is a quasi-metric with a constant less than or equal to $\sqrt{2}$.
\end{lemma}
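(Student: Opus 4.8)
The plan is to verify that $p_G$ satisfies all the axioms of a quasi-metric with constant $c=\sqrt{2}$. Positivity and symmetry are immediate from the defining formula $p_G(x,y)=|x-y|/\sqrt{|x-y|^2+4d_G(x)d_G(y)}$: the numerator vanishes if and only if $x=y$, the expression is manifestly nonnegative and bounded by $1$, and swapping $x$ and $y$ leaves it unchanged because both $|x-y|$ and the product $d_G(x)d_G(y)$ are symmetric. So the only real content is the relaxed triangle inequality \eqref{quasi_inequality}.

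For the relaxed triangle inequality I would not attack the product $d_G(x)d_G(y)$ directly but instead pass through the $j^*_G$-metric, which I may use freely via Lemma~\ref{lem_jsp_bounds}. The key observation is that $j^*_G$ is already known to be a genuine metric (it satisfies the ordinary triangle inequality), so I can transfer its triangle inequality to $p_G$ at the cost of the comparison constants. Concretely, part~(1) of Lemma~\ref{lem_jsp_bounds} gives the two-sided bound $j^*_G\leq p_G\leq\sqrt{2}\,j^*_G$. Applying the upper bound to the left-hand side, the triangle inequality for $j^*_G$ in the middle, and the lower bound to each term on the right, I would estimate
\begin{align*}
p_G(x,y)\leq\sqrt{2}\,j^*_G(x,y)\leq\sqrt{2}\bigl(j^*_G(x,z)+j^*_G(z,y)\bigr)\leq\sqrt{2}\bigl(p_G(x,z)+p_G(z,y)\bigr),
\end{align*}
which is exactly \eqref{quasi_inequality} with $c=\sqrt{2}$.

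The one gap in this sketch is that Lemma~\ref{lem_jsp_bounds} as quoted does not assert that $j^*_G$ is a metric, only the comparison inequalities; I would either cite the standard fact that $j^*_G$ satisfies the triangle inequality (it is established in the reference \cite{hvz} from which Lemma~\ref{lem_jsp_bounds} is drawn) or remark that this is well known. The main obstacle, such as it is, is therefore purely bookkeeping: making sure the three inequalities are chained in the correct direction so that the two factors of $\sqrt{2}$ from the upper and lower comparisons collapse into a single constant $\sqrt{2}$ rather than $2$. Because the triangle inequality for $j^*_G$ contributes no extra constant, the only loss comes from the single upper comparison $p_G\leq\sqrt{2}\,j^*_G$, and this is what pins the quasi-metric constant at $\sqrt{2}$.
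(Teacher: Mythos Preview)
Your proof is correct and follows exactly the same route as the paper: chain the two-sided comparison $j^*_G\leq p_G\leq\sqrt{2}\,j^*_G$ from Lemma~\ref{lem_jsp_bounds}(1) with the triangle inequality for the metric $j^*_G$. The paper's proof is just your displayed chain of inequalities, and it likewise simply invokes ``the fact that the $j^*_G$-metric is a metric'' without re-proving it.
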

\begin{proof}
It follows from Lemma \ref{lem_jsp_bounds}(1) and the fact that the $j^*_G$-metric is a metric that
\begin{align*}
p_G(x,y)\leq \sqrt{2}j^*_G(x,y)\leq \sqrt{2}(j^*_G(x,z)+j^*_G(z,y))\leq\sqrt{2}(p_G(x,z)+p_G(z,y)).
\end{align*}
\end{proof}

For some special choices of the domain $G$, it is possible to find a better constant than $\sqrt{2}$ with which the point pair function is a quasi-metric. For instance, by Lemma \ref{rhojsp_inG}(1), $p_{\uhp^n}(x,y)=s_{\uhp^n}(x,y)$, so the point pair function is a metric in the domain $G=\uhp^n$ and trivially fulfills the inequality \eqref{quasi_inequality} with $c=1$. Furthermore, numerical tests suggest also that the constant $\sqrt{2}$ can be always replaced with a better one, even in the case where the exact shape of the domain is unknown.

\begin{conjecture}
For all domains $G\subsetneq\R^n$, the point pair function $p_G$ is a quasi-metric with a constant less than or equal to $\sqrt{5}\slash2$.
\end{conjecture}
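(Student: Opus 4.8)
The plan is to reduce the geometric statement to a single elementary inequality in six real parameters and then to an optimization over the boundary of the feasible region. Write $\alpha=|z-y|$, $\beta=|x-z|$, $\gamma=|x-y|$ and $a=d_G(x)$, $b=d_G(y)$, $m=d_G(z)$. Since $p_G(x,y)$ depends on the pair $(x,y)$ only through $\gamma$, $a$ and $b$, the desired inequality $p_G(x,y)\le\frac{\sqrt{5}}{2}\bigl(p_G(x,z)+p_G(z,y)\bigr)$ is equivalent to
\begin{equation*}
\frac{\gamma}{\sqrt{\gamma^2+4ab}}\le\frac{\sqrt{5}}{2}\left(\frac{\beta}{\sqrt{\beta^2+4am}}+\frac{\alpha}{\sqrt{\alpha^2+4bm}}\right).
\end{equation*}
The only facts about $G$ that enter are the triangle inequality $\gamma\le\alpha+\beta$ and that $w\mapsto d_G(w)$ is $1$-Lipschitz, whence $|a-m|\le\beta$, $|b-m|\le\alpha$ and $|a-b|\le\gamma$. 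All four conditions hold in every domain, so it suffices to establish the displayed inequality for all $\alpha,\beta,\gamma>0$ and $a,b,m\ge0$ subject to them; no realizability in an actual domain is needed for the upper bound, since a true configuration always satisfies these necessary constraints.

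Next I would push the free parameters to the boundary of the feasible region by monotonicity. The right-hand side is decreasing in $m$ and the left-hand side is independent of $m$, so the worst case occurs at $m=\min\{a+\beta,\,b+\alpha\}$. Likewise the left-hand side is increasing in $\gamma$ (its $\gamma$-derivative is $4ab\,(\gamma^2+4ab)^{-3/2}\ge0$) while the right-hand side is independent of $\gamma$, so the worst case occurs at $\gamma=\alpha+\beta$, which is feasible exactly when $|a-b|\le\alpha+\beta$. Using the symmetry $(x\leftrightarrow y)$, i.e. $(a,\alpha)\leftrightarrow(b,\beta)$, one may assume $m=a+\beta$; then $\beta^2+4am=(\beta+2a)^2$ and the first term on the right collapses to $\beta/(\beta+2a)$. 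After normalizing by scale invariance (the whole expression is homogeneous of degree $0$) the problem becomes the three-parameter inequality
\begin{equation*}
\frac{\alpha+\beta}{\sqrt{(\alpha+\beta)^2+4ab}}\le\frac{\sqrt{5}}{2}\left(\frac{\beta}{\beta+2a}+\frac{\alpha}{\sqrt{\alpha^2+4b(a+\beta)}}\right),
\end{equation*}
to be proved for $a,b\ge0$, $\alpha,\beta>0$ with $a+\beta\le b+\alpha$ and $|a-b|\le\alpha+\beta$.

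Before attacking this, it is worth locating the extremal configuration, as it both fixes the constant and certifies sharpness. On the symmetric diagonal $a=b$, $\alpha=\beta$ (with $m=a+\beta$ and $\gamma=2\beta$) the quotient reduces, after simplification, to $g(a)=\tfrac{\beta+2a}{2\sqrt{\beta^2+a^2}}$, whose only interior critical point is $a=2\beta$, where $g=\frac{\sqrt{5}}{2}$ exactly. Taking $\beta=1$, the tuple $a=b=2$, $m=3$, $\alpha=\beta=1$, $\gamma=2$ therefore attains equality, and it is realized by $G=\R^n\setminus\{P_1,P_2\}$ with $x,z,y$ equal to $-e_1,0,e_1$ and $P_1=-3e_1$, $P_2=3e_1$; there $d_G(x)=d_G(y)=2$ and $d_G(z)=3$. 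Hence $\frac{\sqrt{5}}{2}$ cannot be replaced by any smaller constant, matching the value in the statement.

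The remaining and genuinely hard step is to prove the three-parameter inequality globally, that is, to show that the symmetric critical point above is the true maximum over the whole feasible region. The difficulty is that the quotient is neither convex nor concave, that the choice $m=\min\{a+\beta,\,b+\alpha\}$ forces a case split across the diagonal $a+\beta=b+\alpha$, and that one must rule out asymmetric maxima lying off $a=b$, $\alpha=\beta$. I would proceed by a critical-point analysis on each face of the reduced region, aiming to show that no boundary face and no interior critical point other than the symmetric one exceeds $\frac{\sqrt{5}}{2}$; after clearing the square roots this reduces to verifying nonnegativity of a single explicit but unwieldy polynomial on the feasible box. This is exactly the part that the numerical evidence supports yet resists a clean closed-form argument, which is why the result is recorded here as a conjecture rather than a theorem.
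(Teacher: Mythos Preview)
The statement you are attempting is a \emph{conjecture} in the paper; there is no proof to compare against. What the paper does supply is the sharpness direction only: it observes that in $\B^n$ the triple $x=e_1/3$, $z=0$, $y=-e_1/3$ forces $c\ge\sqrt{5}/2$, and it remarks that the same bound arises in a twice punctured space $\R^n\setminus\{s,t\}$. Your extremal configuration $G=\R^n\setminus\{-3e_1,3e_1\}$ with $x=-e_1$, $z=0$, $y=e_1$ is exactly the latter example (and, after rescaling by $1/3$, coincides with the unit-ball example in the relevant parameters $a:b:m:\alpha:\beta:\gamma$). So on sharpness you recover precisely what the paper records.

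Your reduction goes further than anything the paper attempts: replacing the domain by the six scalars $(\alpha,\beta,\gamma,a,b,m)$ subject to the triangle inequality and the $1$-Lipschitz constraints on $d_G$, and then pushing $\gamma\to\alpha+\beta$ and $m\to\min\{a+\beta,b+\alpha\}$ by monotonicity, is correct and is a sensible line of attack. The monotonicity steps are valid (the left side has $\gamma$-derivative $4ab(\gamma^2+4ab)^{-3/2}\ge0$, and each summand on the right is visibly decreasing in $m$), and relaxing to necessary constraints is harmless for an upper bound.

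That said, by your own admission the decisive step---the three-parameter inequality
\[
\frac{\alpha+\beta}{\sqrt{(\alpha+\beta)^2+4ab}}\le\frac{\sqrt{5}}{2}\left(\frac{\beta}{\beta+2a}+\frac{\alpha}{\sqrt{\alpha^2+4b(a+\beta)}}\right)
\]
on the reduced region---is not established; you only locate the symmetric critical point and appeal to numerics for its global maximality. This is a genuine gap, not a technicality: ruling out asymmetric maxima across the case split $a+\beta\lessgtr b+\alpha$ is exactly the obstacle, and your outline (``clear square roots and verify nonnegativity of an explicit polynomial'') is a plan, not a proof. In short, your write-up is a reasonable programme together with a correct sharpness argument, but it does not settle the conjecture, and the paper does not either.
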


However, for an arbitrary domain $G$, there cannot be a better constant than $\sqrt{5}\slash2$ with which the point pair function is a metric. Namely, if the domain $G$ is the unit ball $\B^n$, we see that we must choose $c\geq\sqrt{5}\slash2$ so that the inequality \eqref{quasi_inequality} holds for the points $x=e_1\slash3$, $z=0$ and $y=-e_1\slash3$. Other domains like this where the inequality \eqref{quasi_inequality} can only hold with $c\geq\sqrt{5}\slash2$ include, for instance, a twice punctured space $\R^n\backslash(\{s\}\cup\{t\})$, $s\neq t\in\R^n$, and all $k$-dimensional hypercubes in $\R^n$ where $1\leq k\leq n$. Consequently, the point pair function is not a metric in any of these domains.

It can be also shown that the point pair function $p_G$ is not a metric in a sector $S_\theta$ with an angle $0<\theta<\pi$. For instance, if $\theta=\pi\slash2$, then the points $x=e^{\pi i\slash5}$, $y=e^{3\pi i\slash10}$ and $z=(x+y)\slash2$ do not fulfill the triangle inequality. However, it is noteworthy that the point pair function is a metric in an open sector $S_\theta$ whenever the angle $\theta$ is from the interval $[\pi,2\pi)$. 

\begin{theorem}
In an open sector $S_\theta$ with an angle $\pi\leq\theta<2\pi$, the point pair function $p_{S_\theta}$ is a metric. 
\end{theorem}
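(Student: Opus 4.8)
The plan is to check the three axioms of Definition \ref{def_metric}; positivity and symmetry are immediate from the formula, so the entire problem is the triangle inequality $p_{S_\theta}(x,y)\le p_{S_\theta}(x,z)+p_{S_\theta}(z,y)$ for arbitrary $x,y,z\in S_\theta$. A convenient device is to write $p_{S_\theta}=\tanh\tfrac12 m$, where $\cosh m(x,y)=1+|x-y|^2/\bigl(2d_{S_\theta}(x)d_{S_\theta}(y)\bigr)$: since $t\mapsto\tanh(t/2)$ is increasing, concave and vanishes at $0$, it is subadditive, so it suffices to prove that $m$ obeys the triangle inequality. The merit of this is that $\cosh m$ has exactly the shape of $\cosh\rho_{\uhp^2}$ wherever the boundary distances involved are realized as perpendicular distances to a bounding line, so on such configurations $p_{S_\theta}$ agrees with $p_{\uhp^2}$, which is already known to be a metric (Lemma \ref{rhojsp_inG}(1) and the identity $p_{\uhp^n}=s_{\uhp^n}$).

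First I would pin down $d_{S_\theta}$. Because $\pi\le\theta<2\pi$, the complement $K=\C\setminus S_\theta$ is a closed convex cone of opening $2\pi-\theta\le\pi$ with apex at the origin, so $d_{S_\theta}(z)=\operatorname{dist}(z,K)$. Using the invariance under the bisector reflection and under scalings (Remark \ref{rmk_invariantmetrics}), I normalize a point into the half $0<\arg z\le\theta/2$, where the nearest boundary ray is the positive real axis and
\begin{align*}
d_{S_\theta}(z)=
\begin{cases}
\operatorname{Im}z, & 0<\arg z\le\pi/2,\\[2pt]
|z|, & \pi/2<\arg z\le\theta/2.
\end{cases}
\end{align*}
Thus every point falls into one of three regimes: a \emph{face regime} at each bounding ray, where $d_{S_\theta}$ is the Euclidean distance to the line carrying that ray, and an \emph{apex regime} $\pi/2<\arg z<\theta-\pi/2$, where $d_{S_\theta}(z)=|z|$. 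The apex regime is nonempty exactly when $\theta>\pi$, which is the structural reason that $\theta\ge\pi$ is the dividing hypothesis.

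The triangle inequality then splits into cases according to the regimes occupied by $x,z,y$, with the swap $x\leftrightarrow y$ and the bisector reflection cutting down their number. The benign case is when all three points lie in the face regime of the \emph{same} ray: there $d_{S_\theta}$ coincides with $\operatorname{Im}(\cdot)$ on the whole triple, so $p_{S_\theta}=p_{\uhp^2}$ on it and the inequality is inherited from $\uhp^2$. I would stress that this is the only configuration reducible by a soft comparison: any half-plane $H\subseteq S_\theta$ satisfies $d_H\le d_{S_\theta}$, so passing to $S_\theta$ only enlarges the boundary distance, and enlarging it at the middle point $z$---or at an endpoint shared with a right-hand term---shrinks the two sides together and leaves the comparison inconclusive. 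Every other configuration has to be settled by a direct estimate of the defining expression.

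The main obstacle is accordingly the genuinely mixed configuration in which $z$ lies in the apex regime while $x$ and $y$ are attached to the two \emph{different} faces, together with the related cases where $z$ alone is in the apex regime. For these I would first use monotonicity in the free parameters to drive the points to extremal positions (for instance $z$ onto the bisector and $x,y$ onto the faces), reducing to a one- or two-parameter inequality, and then verify that inequality directly. The hypothesis $\theta\ge\pi$ enters precisely through the apex-regime identity $d_{S_\theta}(z)=|z|$, a feature present only once $\theta\ge\pi$, which is what ultimately tips the reduced inequality in the correct direction. Carrying out this final algebraic check, and confirming that the chosen extremal positions are genuinely the worst ones, is where the effort concentrates.
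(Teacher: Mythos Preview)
Your write-up is a plan, not a proof: the only configuration you actually settle is ``all three points in the same face regime,'' and everything else is deferred to an unspecified ``direct estimate'' after an unproved monotonicity reduction. That is precisely the substantive part of the theorem, so as it stands there is a genuine gap.

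More importantly, you dismiss the half-plane comparison too quickly. You observe that any half-plane $H\subseteq S_\theta$ has $d_H\le d_{S_\theta}$, which pushes the two right-hand terms the wrong way, and conclude the device is useless outside the single-face case. The paper's proof shows this conclusion is wrong: one should \emph{not} insist on $H\subseteq S_\theta$. Given $x,y\in S_\theta$, the paper takes $H$ to be a half-plane whose boundary line is a common tangent to the two circles $S^1(x,d_{S_\theta}(x))$ and $S^1(y,d_{S_\theta}(y))$. This forces $d_H(x)=d_{S_\theta}(x)$ and $d_H(y)=d_{S_\theta}(y)$ exactly, so $p_{S_\theta}(x,y)=p_H(x,y)$ with no loss on the left-hand side. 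For the right-hand side one first argues, by a simple rotation around $x$ or $y$, that the minimising $z$ may be assumed to lie in the closed region $J$ cut out by the lines $L(x,x')$ and $L(y,y')$ through the unique nearest boundary points; on $J\cap S_\theta$ one then has $d_{S_\theta}(z)\le d_H(z)$ (this is where $\theta\ge\pi$ is used, via uniqueness of the nearest boundary point and the resulting geometry of $J$), hence $p_H(x,z)\le p_{S_\theta}(x,z)$ and likewise for $y$. The triangle inequality for $p_H$ then finishes the job in one line, with no case split into regimes and no algebraic extremisation.

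So the missing idea is exactly the tangent half-plane matched to the pair $(x,y)$, together with the localisation of $z$ to the region $J$; your $\tanh$-subadditivity reduction and regime decomposition are correct but do not by themselves reach the conclusion.
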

\begin{proof}
Trivially, we only need to prove that the point pair function fulfills the triangle inequality in this domain. Fix distinct points $x,y\in S_\theta$. Note that if $\theta\geq\pi$ then, for every point $x\in S_\theta$, there is exactly one point $x'\in S^1(x,d_{S_\theta}(x))\cap\partial S_\theta$. Fix $x',y'$ like this for the points $x,y$, respectively. Furthermore, define $J$ as follows: If $x,y,x',y'$ are collinear, let $J=L(x,y)$; if $L(x,x')$ and $L(y,y')$ are two distinct parallel lines, let $J$ be the closed strip between them; and if $L(x,x')$ are $L(y,y')$ intersect at one point point, let $J$ be the smaller closed sector with points $x,x'$ on its one side and points $y,y'$ on its other side.

We are interested in such a point $z\in S_\theta$ that minimizes the sum $p_{S_\theta}(x,z)+p_{S_\theta}(z,y)$. Note that if $z\notin J$, then it can be rotated around either $x$ or $y$ into a new point $z\in J\cap S_\theta$ so that one of the distances $|x-z|$ and $|z-y|$ does not change and the other one decreases, and the distance $d_{S_\theta}(z)$ increases. Since these changes do not increase the sum $p_{S_\theta}(x,z)+p_{S_\theta}(z,y)$, we can suppose without loss of generality that the point $z$ must belong in $J\cap S_\theta$.

Choose now a half-plane $H$ such that $x,y\in H$ and $\partial H$ is a tangent for both $S^1(x,d_{S_\theta}(x))$ and $S^1(y,d_{S_\theta}(y))$. Now, $d_H(x)=d_{S_\theta}(x)$ and $d_H(y)=d_{S_\theta}(y)$. Clearly, $J\cap S_\theta\subset H$ and, since $\theta\geq\pi$, for every point $z\in J\cap S_\theta$, $d_{S_\theta}(z)\leq d_H(z)$. Recall that the point pair function $p_G$ is a metric in a half-plane domain. It follows that 
\begin{align*}
p_{S_\theta}(x,y)
&=p_H(x,y)
\leq\inf_{z\in J\cap S_\theta}(p_H(x,z)+p_H(z,y))
\leq\inf_{z\in J\cap S_\theta}(p_{S_\theta}(x,z)+p_{S_\theta}(z,y))\\
&=\inf_{z\in S_\theta}(p_{S_\theta}(x,z)+p_{S_\theta}(z,y)),
\end{align*}
which proves our result.
\end{proof}

Because the point pair function has several desirable properties that can be used when creating bounds for hyperbolic type metrics, it is useful to study it more carefully in sector domains. Next, we will show that the point pair function is invariant under a certain conformal mapping defined in a sector. Note that the triangular ratio metric has this same property, see \cite[Thm 4.16, p. 14]{inm}.

\begin{lemma}\label{f*_for_p_inS}
For all angles $0<\theta<2\pi$, the point pair function $p_{S_\theta}$ is invariant under the M\"obius transformation $f:S_\theta\to S_\theta$, $f(x)=x\slash|x|^2$.
\end{lemma}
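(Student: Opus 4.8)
The plan is to exploit the fact that $f(x)=x/|x|^2=1/\overline{x}$ is the inversion in the unit circle, so that both geometric quantities appearing in the definition of $p_{S_\theta}$ transform in a controlled way. Since $\arg(1/\overline{x})=\arg(x)$ and $|1/\overline{x}|=1/|x|$, the map $f$ preserves arguments while inverting moduli; hence it carries $S_\theta$ onto $S_\theta$ and fixes the two bounding rays of $\partial S_\theta$ setwise. Because
\[
p_{S_\theta}(x,y)=\frac{|x-y|}{\sqrt{|x-y|^2+4\,d_{S_\theta}(x)\,d_{S_\theta}(y)}},
\]
it suffices to determine how the Euclidean distance $|x-y|$ and the boundary distance $d_{S_\theta}$ behave under $f$.

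For the first quantity I would use the standard distance identity for inversion: writing $f(x)=1/\overline{x}$, a direct computation gives
\[
f(x)-f(y)=\frac{1}{\overline{x}}-\frac{1}{\overline{y}}=\frac{\overline{y}-\overline{x}}{\overline{x}\,\overline{y}},\qquad\text{so}\qquad |f(x)-f(y)|=\frac{|x-y|}{|x|\,|y|}.
\]
For the second quantity I would invoke the scale invariance of the sector noted in Remark \ref{rmk_invariantmetrics}: since $S_\theta$ is preserved by every stretching $x\mapsto\lambda x$ with $\lambda>0$ and such stretchings scale Euclidean distances linearly, $d_{S_\theta}$ is positively homogeneous of degree one, i.e. $d_{S_\theta}(\lambda x)=\lambda\,d_{S_\theta}(x)$. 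Writing $u=x/|x|$, which lies in $S_\theta$ and satisfies $f(x)/|f(x)|=|x|/\overline{x}=u$ as well, homogeneity gives $d_{S_\theta}(x)=|x|\,d_{S_\theta}(u)$ and $d_{S_\theta}(f(x))=|f(x)|\,d_{S_\theta}(u)=d_{S_\theta}(u)/|x|$; combining these yields the key identity
\[
d_{S_\theta}(f(x))=\frac{d_{S_\theta}(x)}{|x|^2}.
\]

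With both transformation rules in hand, the conclusion is a one-line substitution: the products $d_{S_\theta}(f(x))\,d_{S_\theta}(f(y))=d_{S_\theta}(x)\,d_{S_\theta}(y)/(|x|^2|y|^2)$ and $|f(x)-f(y)|^2=|x-y|^2/(|x|^2|y|^2)$ share the common factor $1/(|x|^2|y|^2)$, which factors out of the square root in the denominator and cancels against the same factor in the numerator, returning exactly $p_{S_\theta}(x,y)$. I expect the only real content to lie in the boundary-distance identity $d_{S_\theta}(f(x))=d_{S_\theta}(x)/|x|^2$; the distance identity for $|f(x)-f(y)|$ and the final cancellation are routine. The homogeneity step is what makes the boundary term cooperate, and the fact that $f$ preserves the radial direction $x/|x|$ is precisely what lets me use the same value $d_{S_\theta}(u)$ for both $x$ and $f(x)$.
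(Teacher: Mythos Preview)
Your proof is correct and follows essentially the same approach as the paper: both arguments rest on the homogeneity $d_{S_\theta}(\lambda x)=\lambda\,d_{S_\theta}(x)$ together with the scaling of Euclidean distance under inversion, after which the factor $1/(|x||y|)$ cancels from numerator and denominator. The paper first invokes Remark~\ref{rmk_invariantmetrics} to normalize $|x|=1$ (so that $f(x)=x$) and then computes directly in polar form, whereas you keep both points general and isolate the identity $d_{S_\theta}(f(x))=d_{S_\theta}(x)/|x|^2$; the underlying content is identical.
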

\begin{proof}
By Remark \ref{rmk_invariantmetrics}, we can fix $x=e^{ki}$ and $y=re^{hi}$ with $r>0$ and $0<k\leq h<\theta$ without loss of generality. Now, $f(x)=x=e^{ki}$ and $f(y)=(1\slash r)e^{hi}$. It follows that
\begin{align*}
p_{S_\theta}(x,y)&=\frac{|1-re^{(h-k)i}|}{\sqrt{|1-re^{(h-k)i}|^2+4d_{S_\theta}(e^{ki})d_{S_\theta}(re^{hi})}}\\
&=\frac{r|1-(1\slash r)e^{(h-k)i}|}{\sqrt{r^2|1-(1\slash r)e^{(h-k)i}|^2+4r^2d_{S_\theta}(e^{ki})d_{S_\theta}((1\slash r)e^{hi})}}\\
&=\frac{|1-(1\slash r)e^{(h-k)i}|}{\sqrt{|1-(1\slash r)e^{(h-k)i}|^2+4d_{S_\theta}(e^{ki})d_{S_\theta}((1\slash r)e^{hi})}}
=p_{S_\theta}(f(x),f(y)),
\end{align*}
which proves the result.
\end{proof}

Let us yet consider the connection between the point pair function and the triangular ratio metric and, especially, what we can tell about the domain by studying these metrics.

\begin{theorem}\label{sp_convex}
\emph{\cite[Theorem 3.8, p. 5]{sqm}}
A domain $G\subsetneq\R^n$ is convex if and only if the inequality $s_G(x,y)\leq p_G(x,y)$ holds for all $x,y\in G$.
\end{theorem}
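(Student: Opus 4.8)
The plan is to use the fact that $s_G(x,y)$ and $p_G(x,y)$ have the common numerator $|x-y|$, so that for every pair $x,y\in G$ with $x\neq y$ the asserted inequality $s_G(x,y)\le p_G(x,y)$ is equivalent to
\[
\inf_{z\in\partial G}\bigl(|x-z|+|z-y|\bigr)\ \ge\ \sqrt{|x-y|^2+4\,d_G(x)\,d_G(y)}.
\]
I would prove both implications in this reformulated shape, writing $a=d_G(x)$ and $b=d_G(y)$ and recording that the open balls $B^n(x,a)$ and $B^n(y,b)$ lie in $G$. The case $x=y$ is trivial, so assume $x\neq y$ throughout.

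For the forward implication (convex $\Rightarrow$ inequality) I would argue pointwise on the boundary. Fix $z\in\partial G$ and, using convexity, choose a supporting hyperplane $P$ of $G$ at $z$; let $H$ be the closed half-space bounded by $P$ with $G\subseteq H$. Since $B^n(x,a)\subseteq G\subseteq H$ and $B^n(y,b)\subseteq G\subseteq H$, the distances from $x$ and $y$ to $P$ satisfy $\mathrm{dist}(x,P)\ge a$ and $\mathrm{dist}(y,P)\ge b$. As $x,y$ lie on the same side of $P$, reflecting $x$ across $P$ to a point $x^{*}$ gives $\inf_{w\in P}(|x-w|+|w-y|)=|x^{*}-y|$, and a direct computation using the orthogonal decomposition relative to $P$ yields $|x^{*}-y|^2=|x-y|^2+4\,\mathrm{dist}(x,P)\,\mathrm{dist}(y,P)$. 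Because $z\in P$, this gives
\[
|x-z|+|z-y|\ \ge\ |x^{*}-y|\ =\ \sqrt{|x-y|^2+4\,\mathrm{dist}(x,P)\,\mathrm{dist}(y,P)}\ \ge\ \sqrt{|x-y|^2+4ab}.
\]
Taking the infimum over $z\in\partial G$ then yields the reformulated inequality, hence $s_G(x,y)\le p_G(x,y)$.

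For the converse I would prove the contrapositive by exhibiting a single violating pair. If $G$ is not convex, there exist $x,y\in G$ with $[x,y]\not\subseteq G$; since $x\in G$ and $G$ is open while the segment eventually leaves $G$, a connectedness argument produces an interior point $z_0\in[x,y]\cap\partial G$. As $z_0$ is collinear with $x,y$ we get $|x-z_0|+|z_0-y|=|x-y|$, so $\inf_{z\in\partial G}(|x-z|+|z-y|)\le|x-y|$ and therefore $s_G(x,y)\ge 1$; combined with $s_G\le 1$ this forces $s_G(x,y)=1$. Since $x,y\in G$ gives $d_G(x),d_G(y)>0$, we have $p_G(x,y)<1=s_G(x,y)$, so the inequality fails.

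I expect the forward (convexity) direction to be the main point. It rests on the supporting-hyperplane-plus-reflection computation, and the two slightly delicate steps are justifying $\mathrm{dist}(x,P)\ge d_G(x)$ from the inclusion of the maximal inscribed ball in the half-space, and the identity $|x^{*}-y|^2=|x-y|^2+4\,\mathrm{dist}(x,P)\,\mathrm{dist}(y,P)$, which also records the equality $p_H=s_H$ for a half-space already visible in Lemma~\ref{rhojsp_inG}(1). The converse is comparatively soft, relying only on the intermediate-value fact that a segment leaving an open set must meet its boundary.
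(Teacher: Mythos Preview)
The paper does not give its own proof of this theorem; it is simply quoted from \cite[Theorem~3.8]{sqm}, so there is no in-paper argument to compare against. Your proof is correct in both directions. The forward implication via a supporting hyperplane at each boundary point $z$, followed by the reflection identity $|x^{*}-y|^2=|x-y|^2+4\,\mathrm{dist}(x,P)\,\mathrm{dist}(y,P)$, is exactly the natural route (and the resulting inequality $s_G\le p_G$ for convex $G$ is also recorded in \cite[Lemma~11.6(1)]{hkvbook}, as the present paper notes in Section~\ref{sct3}). Your converse is likewise clean: if $[x,y]\not\subseteq G$ with $x,y\in G$, the first exit point $z_0\in(x,y)\cap\partial G$ forces $s_G(x,y)=1>p_G(x,y)$ since $d_G(x),d_G(y)>0$. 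No gaps.
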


\begin{theorem}\label{thm_Gminusconvex}
If $G\subsetneq\R^n$ is a domain and the inequality $s_G(x,y)\geq p_G(x,y)$ holds for all $x,y\in G$, the complement $\R^n\backslash G$ is a connected, convex set. 
\end{theorem}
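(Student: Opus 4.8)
The plan is to argue by contraposition. Since a convex subset of $\R^n$ is automatically connected, the negation of the conclusion ``$\R^n\setminus G$ is connected and convex'' is simply ``$\R^n\setminus G$ is not convex''. Writing $E=\R^n\setminus G$ (a closed set, as $G$ is open), I therefore assume that $E$ is not convex and aim to produce points $x,y\in G$ with $s_G(x,y)<p_G(x,y)$. Unwinding the definitions, and since $|x-y|>0$, this desired strict inequality is equivalent to
\[
\sqrt{|x-y|^2+4d_G(x)d_G(y)}<\inf_{z\in\partial G}(|x-z|+|z-y|).
\]

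The key geometric input is the classical characterization of convex sets as Chebyshev sets (Motzkin--Bunt): a closed subset of $\R^n$ is convex if and only if every point of $\R^n$ has a unique nearest point in it. As $E$ is closed and not convex, there is a point $M$ admitting (at least) two distinct nearest points $P,P''\in E$. Such a point cannot lie in $E$ (a point of $E$ is its own unique nearest point), so $M\in G$; moreover $P,P''\in\partial G$ and $\ell:=d_G(M)=|M-P|=|M-P''|>0$, with the open ball $B^n(M,\ell)\subset G$. In particular $\partial G\cap B^n(M,\ell)=\emptyset$, so every $z\in\partial G$ satisfies $|z-M|\geq\ell$. Normalising $M=0$ and choosing coordinates so that $P=-\ell e_1$, I then set $x=-se_1$ and $y=se_1$ for a parameter $0<s<\ell$; note that $x,y\in B^n(0,\ell)\subset G$.

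With this configuration the two sides of the displayed inequality are estimated separately. For the right-hand side, the inclusion $B^n(0,\ell)\subset G$ forces $d_G(x)\geq\ell-s$, while $P=-\ell e_1\in\partial G$ gives $d_G(x)\leq|x-P|=\ell-s$, so $d_G(x)=\ell-s$; using instead the second contact point $P''\neq P$ together with $|P''|=\ell$, one gets $d_G(y)\leq|y-P''|<\ell+s$, the strictness coming precisely from $\langle P'',e_1\rangle>-\ell$. Hence $d_G(x)d_G(y)<(\ell-s)(\ell+s)=\ell^2-s^2$, and the right-hand side is strictly below $\sqrt{4s^2+4(\ell^2-s^2)}=2\ell$. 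For the left-hand side, the solid region $\{z:|x-z|+|z-y|\leq 2\ell\}$ is the ellipsoid with foci $\pm se_1$, major semi-axis $\ell$ (along $e_1$) and minor semi-axes $\sqrt{\ell^2-s^2}$, hence contained in $\overline{B}^n(0,\ell)$; since $\partial G$ avoids the open ball $B^n(0,\ell)$, no boundary point lies in the interior of this ellipsoid, so $\inf_{z\in\partial G}(|x-z|+|z-y|)\geq 2\ell$. Combining the two estimates yields $p_G(x,y)>s_G(x,y)$, contradicting the hypothesis and proving the theorem.

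The step I expect to carry the real weight is invoking the Motzkin--Bunt characterization to extract, from mere non-convexity of $E$, a single inscribed ball of $G$ touching $\partial G$ in two distinct points; once this configuration is in hand, the remaining work is the elementary ``ellipsoid inside the inscribed ball'' comparison above. A careful write-up should also confirm the minor points that $M\in G$ rather than on $\partial G$ and that $B^n(0,\ell)$ is genuinely contained in $G$ (not merely in $\overline{G}$), both of which follow from $M$ having positive distance to $E$.
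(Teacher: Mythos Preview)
Your proof is correct and follows a genuinely different route from the paper's.

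Both arguments proceed by contraposition and reduce to finding an open ball $B^n(M,\ell)\subset G$ whose boundary sphere meets $\partial G$ in at least two distinct points. You obtain this configuration cleanly by invoking the Motzkin--Bunt theorem (a closed set in $\R^n$ is convex if and only if it is a Chebyshev set). The paper instead asserts, without much justification, that from a segment $[u,v]$ with endpoints on $\partial G$ and interior meeting $G$ one can extract such a doubly-touching ball; this step is in fact nontrivial and essentially amounts to the same nearest-point phenomenon you cite explicitly.

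Once the ball is in hand, the two arguments diverge further. The paper places $x$ and $y$ symmetrically with respect to the two contact points (at $\cos(\mu/2)u'$ and $\cos(\mu/2)v'$), then bounds $s_G(x,y)$ from above via $s_{\B^n}(x,y)$ and the explicit formula of Theorem~\ref{smetricinB_forconjugate} for conjugate pairs, handling the case $\mu=\pi$ separately. Your choice $x=-se_1$, $y=se_1$ on the diameter through one contact point $P=-\ell e_1$ is more asymmetric but leads to an entirely elementary estimate: $d_G(x)=\ell-s$ exactly, $d_G(y)<\ell+s$ via the second contact point $P''$, and the ``ellipsoid inside the ball'' observation gives $\inf_{z\in\partial G}(|x-z|+|z-y|)\geq 2\ell$ directly. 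This avoids any dependence on the unit-disk formula for $s_{\B^2}$ and makes the $n$-dimensional case transparent. The paper's approach, by contrast, stays within results already developed in the article, at the cost of a case split and an appeal to a $2$-dimensional computation.
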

\begin{proof}
Suppose that $\R^n\backslash G$ is either non-convex or non-connected. Now, there are some $u,v\in\partial G$ such that $[u,v]\cap G\neq\varnothing$. It follows that there must be some ball $B^n(c,r)\subset G$ so that the intersection $S^n(c,r)\cap\partial G$ contains distinct points $u',v'$. Without loss of generality, we can assume that $c=0$ and $r=1$. Since $u',v'$ are distinct, $\mu=\measuredangle U'OV'\neq0$. If $\mu=\pi$, it holds for points $x=u'\slash2$ and $y=v'\slash2$ that
\begin{align*}
s_G(x,y)
=\frac{1}{2}
<\frac{1}{\sqrt{2}}
=\frac{1}{\sqrt{1^2+4(1\slash2)^2}}
=p_G(x,y).
\end{align*}
If $0<\mu<\pi$ instead, fix $x=\cos(\mu\slash2)u'$ and $y=\cos(\mu\slash2)v'$. By the law of cosines and the sine double-angle formula, we will have $|x-y|=\sin(\mu)$. It follows now from Theorem \ref{smetricinB_forconjugate} and a few trigonometric identities that
\begin{align*}
s_G(x,y)
&\leq s_{\B^n}(x,y)
\leq\cos(\mu\slash2)
=\frac{\sin(\mu)}{2\sin(\mu\slash2)}
=\frac{\sin(\mu)}{\sqrt{4\sin^2(\mu\slash2)}}\\
&=\frac{\sin(\mu)}{\sqrt{\sin^2(\mu)+4\sin^2(\mu\slash2)(1-\cos^2(\mu\slash2))}}
=\frac{\sin(\mu)}{\sqrt{\sin^2(\mu)+4(1-\cos^2(\mu\slash2))^2}}\\
&<\frac{\sin(\mu)}{\sqrt{\sin^2(\mu)+4(1-\cos(\mu\slash2))^2}}
=p_G(x,y).
\end{align*}
Consequently, if $\R^n\backslash G$ is not convex, there are always some points $x,y\in G$ such that the inequality $s_G(x,y)<p_G(x,y)$ holds and the theorem follows from this.
\end{proof}

\begin{corollary}\label{cor_halfspace}
If $G\subsetneq\R^n$ is a domain such that $s_G(x,y)=p_G(x,y)$ holds for all $x,y\in G$, then $G$ is a half-space. 
\end{corollary}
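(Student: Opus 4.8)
The plan is to extract the two one-sided inequalities from the equality hypothesis, feed them into the two preceding theorems, and then invoke a standard convex-separation fact. Since $s_G(x,y)=p_G(x,y)$ for all $x,y\in G$, in particular the inequality $s_G(x,y)\le p_G(x,y)$ holds for all $x,y\in G$, so Theorem \ref{sp_convex} shows that $G$ is convex. At the same time $s_G(x,y)\ge p_G(x,y)$ holds for all $x,y\in G$, so Theorem \ref{thm_Gminusconvex} shows that the complement $\R^n\setminus G$ is a connected, convex set. Thus both $G$ and its complement are convex, and this is the structural fact I would exploit.

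Writing $C:=\R^n\setminus G$, note that $G$ is open (being a domain) and $C$ is closed and nonempty (because $G\subsetneq\R^n$). I would now argue that a proper open convex set whose complement is convex is necessarily an open half-space. The sets $G$ and $C$ are disjoint nonempty convex sets with $G$ open, so the hyperplane separation theorem supplies a nonzero $a\in\R^n$ and a $b\in\R$ with $\langle a,x\rangle<b$ for every $x\in G$ and $\langle a,x\rangle\ge b$ for every $x\in C$, where the strictness on the $G$-side follows from $G$ being open. Setting $H=\{x\in\R^n\mid\langle a,x\rangle<b\}$, the first inequality gives $G\subseteq H$, while the second gives $C\subseteq\R^n\setminus H$, i.e.\ $H\subseteq G$. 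Hence $G=H$ is an open half-space, which is exactly the claim.

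The only genuinely nontrivial ingredient is the separation step, namely that two complementary convex sets are separated by a hyperplane forced to coincide with their common boundary; everything else is bookkeeping with the two theorems. If one wanted a fully self-contained argument, I expect the main obstacle to be justifying the strict separation on the open side together with the basic point that $C$ has nonempty interior, so that the separating hyperplane genuinely splits $\R^n$: a convex set with empty interior lies in a hyperplane, and then $\R^n$ minus such a set fails to be convex, contradicting the convexity of $G$ already established. With the separation theorem in hand these subtleties are absorbed, and the corollary follows immediately from Theorems \ref{sp_convex} and \ref{thm_Gminusconvex}.
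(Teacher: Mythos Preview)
Your proof is correct and follows the same route as the paper: apply Theorems \ref{sp_convex} and \ref{thm_Gminusconvex} to conclude that both $G$ and $\R^n\setminus G$ are convex, and then deduce that $G$ is a half-space. The paper simply asserts that this last implication holds ``directly,'' whereas you spell it out via the hyperplane separation theorem; your added detail is a legitimate way to justify that step.
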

\begin{proof}
By Theorems \ref{sp_convex} and \ref{thm_Gminusconvex}, both the sets $G$ and $\R^n\backslash G$ must be convex, from which the result follows directly.  
\end{proof}

\section{New quasi-metric}\label{sct4}

In this section, we define a new intrinsic quasi-metric $w_G$ in a convex domain $G$ and study its basic properties. As can be seen from Theorem \ref{thm_swconvex}, this function gives a lower bound for the triangular ratio metric. Since the point pair function serves as an upper bound for the triangular ratio metric, these two quasi-metrics can be used to form bounds for the triangular ratio distance like in Corollary \ref{cor_jwsp_convex} below. Furthermore, these three functions are equivalent in the case of the half-space, see Proposition \ref{prop_wsp_inH}, so these bounds are clearly essentially sharp at least in some cases.

First, consider the following definition.

\begin{definition}\label{def_wcon}
Let $G\subsetneq\R^n$ be a convex domain. For any $x\in G$, there is a non-empty set
\begin{align*}
\widetilde{X}=\{\widetilde{x}\in S^{n-1}(x,2d_G(x))\text{ }|\text{ }(x+\widetilde{x})\slash2\in\partial G\}.    
\end{align*}
Define now a function $w_G:G\times G\to[0,1]$,
\begin{align*}
w_G(x,y)&=\frac{|x-y|}{\min\{\inf_{\widetilde{y}\in\widetilde{Y}}|x-\widetilde{y}|,\inf_{\widetilde{x}\in\widetilde{X}}|y-\widetilde{x}|\}},
\quad x,y\in G.
\end{align*}
\end{definition}

Note that we can only define the function $w_G$ for convex domains $G$ because, for a non-convex domain $G$ and some points $x,y\in G$, there are some points $x,y\in G$ such that $y=\widetilde{x}$ with some $\widetilde{x}\in\widetilde{X}$ and the denominator in the expression of $w_G$ would become zero.

\begin{proposition}\label{prop_wsp_inH}
For all points $x,y\in\uhp^n$, 
\begin{align*}
w_{\uhp^n}(x,y)=s_{\uhp^n}(x,y)=p_{\uhp^n}(x,y).    
\end{align*}
\end{proposition}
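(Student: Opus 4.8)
We already know from Lemma~\ref{rhojsp_inG}(1) that $s_{\uhp^n}(x,y)=p_{\uhp^n}(x,y)$, so the plan is to establish the single new equality $w_{\uhp^n}(x,y)=p_{\uhp^n}(x,y)$ and let the rest follow. The strategy is a direct computation: I would unfold Definition~\ref{def_wcon} in the explicit case $G=\uhp^n$, identify the set $\widetilde X$ (and $\widetilde Y$) concretely, minimize the relevant distances, and check that the resulting denominator matches $\sqrt{|x-y|^2+4d_{\uhp^n}(x)d_{\uhp^n}(y)}$, which is exactly the denominator of $p_{\uhp^n}$.

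The first step is to describe $\widetilde X$ for the half-space. Writing $x=(x',x_n)$ with $x_n=d_{\uhp^n}(x)>0$, the sphere $S^{n-1}(x,2d_G(x))$ meets the condition $(x+\widetilde x)/2\in\partial\uhp^n$ precisely when the midpoint lies on the hyperplane $\{x_n=0\}$. Since $\partial\uhp^n$ is a hyperplane, for each $x$ the nearest boundary point is the unique foot $(x',0)$, and its reflection through that foot is the single point $\widetilde x=(x',-x_n)$; thus $\widetilde X=\{\widetilde x\}$ is a singleton, namely the reflection of $x$ across $\partial\uhp^n$. This collapses both infima in the definition of $w_{\uhp^n}$ to honest distances $|x-\widetilde y|$ and $|y-\widetilde x|$ with $\widetilde y=(y',-y_n)$.

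The second step is to compute $|y-\widetilde x|$ (and symmetrically $|x-\widetilde y|$) and observe they coincide. With $x=(x',x_n)$, $\widetilde x=(x',-x_n)$, $y=(y',y_n)$, one gets
\begin{align*}
|y-\widetilde x|^2=|y'-x'|^2+(y_n+x_n)^2=|y'-x'|^2+(y_n-x_n)^2+4x_ny_n=|x-y|^2+4d_{\uhp^n}(x)d_{\uhp^n}(y),
\end{align*}
and the same value arises from $|x-\widetilde y|^2$ by symmetry. Hence the minimum of the two equals $\sqrt{|x-y|^2+4d_{\uhp^n}(x)d_{\uhp^n}(y)}$, so $w_{\uhp^n}(x,y)=p_{\uhp^n}(x,y)$ directly from the definitions.

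The only genuinely delicate point is the first step: verifying that $\widetilde X$ really is a singleton and that the infima in Definition~\ref{def_wcon} are attained at the reflected point rather than at some other candidate boundary point. This hinges on the half-space boundary being a single hyperplane so that each $x$ has a unique nearest boundary point; I would state this explicitly to justify replacing $\inf_{\widetilde x\in\widetilde X}$ by a single term. Once that reduction is clear, the remaining algebra is the routine identity above, and combining $w_{\uhp^n}=p_{\uhp^n}$ with Lemma~\ref{rhojsp_inG}(1) yields the full chain of equalities claimed in the proposition.
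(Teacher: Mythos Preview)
Your proposal is correct and follows essentially the same route as the paper: both arguments identify $\widetilde X$ as the singleton $\{(x',-x_n)\}$ (the reflection of $x$ across $\partial\uhp^n$), verify directly that $|y-\widetilde x|^2=|x-\widetilde y|^2=|x-y|^2+4x_ny_n$, and then invoke Lemma~\ref{rhojsp_inG}(1) for $s_{\uhp^n}=p_{\uhp^n}$. Your version is slightly more explicit about why $\widetilde X$ is a singleton, but the underlying computation is identical.
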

\begin{proof}
For all $x=(x_1,...,x_n)\in\uhp^n$, there is only one point $\widetilde{x}=(x_1,...,x_{n-1},-x_n)=x-2x_ne_n$ in the set $\widetilde{X}$. Thus, for all $x,y\in\uhp^n$, 
\begin{align*}
\frac{w_{\uhp^n}(x,y)}{p_{\uhp^n}(x,y)}=\frac{\sqrt{|x-y|^2+4x_ny_n}}{\min\{|x-\widetilde{y}|,|y-\widetilde{x}|\}}
=\frac{\sqrt{|x-y|^2+4x_ny_n}}{\min\{|x-y+2y_ne_n|,|y-x+2x_ne_n|\}}=1.   
\end{align*}
The result $s_{\uhp^n}(x,y)=p_{\uhp^n}(x,y)$ is in Lemma \ref{rhojsp_inG}(1). 
\end{proof}

\begin{lemma}\label{lem_winS}
For all points $x,y\in S_\theta$ with $0<\theta\leq\pi$, $s_{S_\theta}(x,y)=w_{S_\theta}(x,y)$.
\end{lemma}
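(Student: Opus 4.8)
The plan is to prove the two inequalities $w_{S_\theta}\le s_{S_\theta}$ and $w_{S_\theta}\ge s_{S_\theta}$ separately. The first holds because $S_\theta$ with $\theta\le\pi$ is convex, so it is an instance of the general lower-bound property of $w$ in convex domains (Theorem \ref{thm_swconvex}); everything therefore reduces to the reverse inequality. Writing $D_s=\inf_{z\in\partial S_\theta}(|x-z|+|z-y|)$ and $D_w=\min\{\inf_{\widetilde y\in\widetilde Y}|x-\widetilde y|,\inf_{\widetilde x\in\widetilde X}|y-\widetilde x|\}$ for the two denominators in $s_{S_\theta}=|x-y|/D_s$ and $w_{S_\theta}=|x-y|/D_w$, the remaining inequality $w_{S_\theta}\ge s_{S_\theta}$ is equivalent to $D_w\le D_s$. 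The case $\theta=\pi$ is $S_\pi=\uhp^2$ and is already contained in Proposition \ref{prop_wsp_inH}, so I assume $0<\theta<\pi$. By Remark \ref{rmk_invariantmetrics} I normalize the bounding rays of $S_\theta$ to argument $0$ (the ray $R_0$ on the positive real axis) and argument $\theta$ (the ray $R_\theta$), and write $x=re^{i\alpha}$, $y=\rho e^{i\beta}$ with $\alpha,\beta\in(0,\theta)$.

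First I would analyse $D_s$. On each bounding ray the function $z\mapsto|x-z|+|z-y|$ is convex, so its minimum over the ray is attained either at an interior point — where, by the reflection principle, it equals $|x-\sigma y|$ for the reflection $\sigma$ across that edge-line, realised where the segment $[x,\sigma y]$ meets the ray — or at the vertex $0$, with value $|x|+|y|$. A direct computation shows that the segment from $x$ to $\overline{y}$ meets the real axis at abscissa $r\rho\sin(\alpha+\beta)/(r\sin\alpha+\rho\sin\beta)$, which is nonnegative exactly when $\alpha+\beta\le\pi$; the analogous on-ray condition for $R_\theta$ is $\alpha+\beta\ge 2\theta-\pi$. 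Since $\theta<\pi$, these two conditions cannot fail simultaneously, so at least one edge carries an interior (on-ray) reflection point. As the vertex value $|x|+|y|$ always dominates the line-reflection distance $\min_{z\in\ell}(|x-z|+|z-y|)$, it follows that $D_s$ is realised as an on-ray reflection distance; after relabelling the edges I may assume $D_s=|x-\overline{y}|=|\overline{x}-y|$, the reflection across the line $\ell_0\supseteq R_0$.

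The crux is to recognise this reflection as an admissible one for $w$, that is, as a reflection through a \emph{nearest} boundary point. Since $\theta/2<\pi/2$, the nearest boundary point of $x$ lies on $R_0$ precisely when $\alpha\le\theta/2$ (the perpendicular foot then sits on the ray and is closest), and similarly for $y$. I claim $\min\{\alpha,\beta\}\le\theta/2$ whenever $D_s=|x-\overline y|$. Indeed, if $\alpha,\beta>\theta/2$, then $u:=\alpha+\beta\in(\theta,2\theta)$ and the line-reflection distance $|x-\sigma_\theta y|$ across the line of $R_\theta$ satisfies
\begin{align*}
|x-\sigma_\theta y|^2-|x-\overline y|^2=2r\rho\bigl(\cos u-\cos(u-2\theta)\bigr)=-4r\rho\,\sin\theta\,\sin(u-\theta)<0,
\end{align*}
using $0<\theta<\pi$ and $0<u-\theta<\theta<\pi$; moreover $\alpha+\beta>\theta\ge 2\theta-\pi$ guarantees that the $R_\theta$-reflection point lies on its ray, so the $R_\theta$ minimum equals $|x-\sigma_\theta y|<|x-\overline y|=D_s$, contradicting the optimality of $R_0$. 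Hence, say, $\beta\le\theta/2$, so $\overline y\in\widetilde Y$ and $D_w\le|x-\overline y|=D_s$ (if instead $\alpha\le\theta/2$, one uses $\overline x\in\widetilde X$ and $D_w\le|\overline x-y|=D_s$). This yields $D_w\le D_s$, hence $w_{S_\theta}\ge s_{S_\theta}$, and together with the convex lower bound the equality $w_{S_\theta}=s_{S_\theta}$ follows.

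I expect the third step to be the main obstacle: matching the reflection that realises the $s$-optimum with a genuine nearest-point reflection, so that it is actually counted by $w$. This is exactly where the hypothesis $\theta\le\pi$ is indispensable, entering through the sign of $\sin\theta\,\sin(u-\theta)$; the same inequality is what rules out, in the second step, the degenerate possibility that both edges fail to carry an on-ray reflection point (which would force the $s$-optimum onto the vertex and break the matching).
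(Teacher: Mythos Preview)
Your proof is correct. The route, however, is quite different from the paper's. The paper argues in one stroke: by Heron's reflection principle, $D_s=\min\{|x'-y|,|x-y'|\}$ where $x',y'$ are the reflections of $x,y$ across their respective \emph{nearest} sides of $S_\theta$; since for a convex sector the sets $\widetilde X,\widetilde Y$ consist precisely of those nearest-side reflections, $D_w$ is the same minimum, and equality follows. The paper's proof is therefore a direct identification of the two denominators and does not separate the two inequalities.

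You instead prove $w\le s$ by invoking Theorem~\ref{thm_swconvex} (a forward reference, but logically independent of this lemma, so there is no circularity), and then spend the real effort on $D_w\le D_s$ by (i) locating the Heron optimum on one of the rays, (ii) ruling out the vertex via the triangle inequality, and (iii) showing with the identity $|x-\sigma_\theta y|^2-|x-\overline y|^2=-4r\rho\sin\theta\sin(u-\theta)$ that the optimal side must also be the nearest side of at least one of $x,y$. This is exactly the content the paper hides behind the phrase ``reflected over the closest side'': your step (iii) is the justification the paper omits, and it is where the hypothesis $\theta\le\pi$ genuinely enters. So your argument is longer but more explicit about why ``closest side'' is the right reflection; the paper's argument is shorter but leaves that verification to the reader. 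Two small remarks: you only use the implication ``$\alpha\le\theta/2\Rightarrow$ nearest boundary point of $x$ on $R_0$'' (which is what your argument needs), not the full ``precisely when''; and the appeal to Theorem~\ref{thm_swconvex} could be avoided, since your analysis already yields $D_w=D_s$ once you observe that in a convex sector $\widetilde X,\widetilde Y$ are exactly the nearest-side reflections.
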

\begin{proof}
By the known solution to Heron's problem, the triangular ratio distance between $x,y\in S_\theta$ is
\begin{align}\label{swS_dist}
s_{S_\theta}(x,y)=\frac{|x-y|}{\inf_{z\in\partial S_\theta}(|x-z|+|z-y|)}=\frac{|x-y|}{\min\{|x'-y|,|x-y'|\}},  \end{align}
where the points $x',y'$ are the points $x,y$ reflected over the closest side of the sector $S_\theta$, respectively. Since the points in sets $\widetilde{X}$ and $\widetilde{Y}$ are similarly found by reflecting $x,y$ over the closest sides, the distance $w_{S_\theta}(x,y)$ is equivalent to \eqref{swS_dist} and the result follows.
\end{proof}

While it trivially follows from Lemma \ref{lem_winS} that the function $w_G$ is a metric in the case $G=S_\theta$ with some $0<\theta\leq\pi$, this is not true for all convex domains $G$, as can be seen with the following example.

\begin{example}\label{ex_forw}
$G=\{z\in\C\text{ }|\text{ }-1<\text{Re}(z)<1,0<\text{Im}(z)<1\}$ is a convex domain, in which $w_G$ is not a metric.
\end{example}
\begin{proof}
If $x=1\slash2+k+i\slash2$, $y=-1\slash2+i\slash2$ and $z=-1\slash2-k+i\slash2$ with $0<k<1\slash3$, it follows that
\begin{align*}
w_G(x,y)=\frac{1+k}{\sqrt{1+(1+k)^2}},\quad
w_G(x,z)=\frac{1+2k}{2},\quad
w_G(z,y)=\frac{k}{1-k}   
\end{align*}
and, consequently,
\begin{align*}
\lim_{k\to0^+}\frac{w_G(x,y)}{w_G(x,z)+w_G(z,y)}=\lim_{k\to0^+}\frac{2(1-k^2)}{\sqrt{1+(1+k)^2}(1+3k-2k^2)}=\sqrt{2}. 
\end{align*}
\end{proof}

Let us next show that $w_G$ is a quasi-metric by finding first the inequalities between it and two hyperbolic type metrics, the $j^*_G$-metric and the triangular ratio metric, in convex domains.

\begin{proposition}\label{prop_jw_con}
For any convex domain $G\subsetneq\R^n$ and all $x,y\in G$, $j^*_G(x,y)\leq w_G(x,y)$.
\end{proposition}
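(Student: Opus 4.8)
The plan is to compare the two functions denominator-to-denominator. Both $j^*_G(x,y)$ and $w_G(x,y)$ carry the same numerator $|x-y|$, so (excluding the trivial case $x=y$, where both vanish) the desired inequality $j^*_G(x,y)\le w_G(x,y)$ is equivalent to the reverse inequality between the denominators, namely
\begin{align*}
\min\Big\{\inf_{\widetilde{y}\in\widetilde{Y}}|x-\widetilde{y}|,\ \inf_{\widetilde{x}\in\widetilde{X}}|y-\widetilde{x}|\Big\}\le |x-y|+2\min\{d_G(x),d_G(y)\}.
\end{align*}
Having made this reduction, I would exploit the defining property of the sets $\widetilde{X}$ and $\widetilde{Y}$ from Definition \ref{def_wcon}.

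Next I would use that every $\widetilde{x}\in\widetilde{X}$ lies on the sphere $S^{n-1}(x,2d_G(x))$, so that $|x-\widetilde{x}|=2d_G(x)$ exactly, and similarly $|y-\widetilde{y}|=2d_G(y)$ for every $\widetilde{y}\in\widetilde{Y}$. By the triangle inequality, for each such $\widetilde{x}$ we obtain $|y-\widetilde{x}|\le |y-x|+|x-\widetilde{x}|=|x-y|+2d_G(x)$, and symmetrically $|x-\widetilde{y}|\le |x-y|+2d_G(y)$. Since both $\widetilde{X}$ and $\widetilde{Y}$ are non-empty, taking infima yields $\inf_{\widetilde{x}\in\widetilde{X}}|y-\widetilde{x}|\le |x-y|+2d_G(x)$ and $\inf_{\widetilde{y}\in\widetilde{Y}}|x-\widetilde{y}|\le |x-y|+2d_G(y)$.

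Finally I would dispatch the outer minimum by a symmetry argument: assuming without loss of generality that $d_G(x)\le d_G(y)$, so that $\min\{d_G(x),d_G(y)\}=d_G(x)$, the left-hand minimum is bounded above by $\inf_{\widetilde{x}\in\widetilde{X}}|y-\widetilde{x}|\le |x-y|+2d_G(x)$, which is precisely the right-hand side; the case $d_G(y)\le d_G(x)$ is handled identically using the bound coming from $\widetilde{Y}$. This establishes the denominator inequality and hence the proposition.

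I do not anticipate a genuine obstacle here. Convexity of $G$ enters only to guarantee that $w_G$ is well defined, i.e.\ that $\widetilde{X},\widetilde{Y}$ are non-empty and the denominator is strictly positive (as already noted after the definition), and the whole argument rests on the single identity $|x-\widetilde{x}|=2d_G(x)$ together with the triangle inequality. The only point demanding a little care is the treatment of the outer minimum, which the \emph{without loss of generality} split resolves cleanly.
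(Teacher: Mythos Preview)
Your proposal is correct and follows essentially the same approach as the paper's proof: both reduce to the denominator comparison and use the triangle inequality together with $|x-\widetilde{x}|=2d_G(x)$, $|y-\widetilde{y}|=2d_G(y)$. The only cosmetic difference is that the paper takes the minimum on both sides simultaneously (using that $a\le c$ and $b\le d$ imply $\min\{a,b\}\le\min\{c,d\}$) rather than splitting into a without-loss-of-generality case.
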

\begin{proof}
By the triangle inequality,
\begin{align*}
&\min\{\inf_{\widetilde{y}\in\widetilde{Y}}|x-\widetilde{y}|,\inf_{\widetilde{x}\in\widetilde{X}}|y-\widetilde{x}|\}
\leq\min\{|x-y|+d(y,\widetilde{Y}),|x-y|+d(x,\widetilde{X})\}\\
&=\min\{|x-y|+2d_G(y),|x-y|+2d_G(x)\}
=|x-y|+2\min\{d_G(x),d_G(y)\},
\end{align*}
so the result follows.
\end{proof}

\begin{theorem}\label{thm_swconvex}
For an arbitrary convex domain $G\subsetneq\R^n$ and all $x,y\in G$,
\begin{align*}
w_G(x,y)\leq s_G(x,y)\leq\sqrt{2}w_G(x,y).    
\end{align*}
\end{theorem}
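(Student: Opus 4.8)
The plan is to prove the two inequalities separately, exploiting the explicit formulas for $s_G$ and $w_G$ together with the definition of the sets $\widetilde{X},\widetilde{Y}$. Recall that for a convex domain both quantities are ratios with the same numerator $|x-y|$, so everything reduces to comparing the two denominators
\[
D_s=\inf_{z\in\partial G}(|x-z|+|z-y|),\qquad
D_w=\min\Bigl\{\inf_{\widetilde{y}\in\widetilde{Y}}|x-\widetilde{y}|,\ \inf_{\widetilde{x}\in\widetilde{X}}|y-\widetilde{x}|\Bigr\}.
\]
Since both $s_G$ and $w_G$ are decreasing in their denominators, the desired chain $w_G\le s_G\le\sqrt{2}\,w_G$ is equivalent to $D_s\le D_w\le\sqrt{2}\,D_s$.

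For the left inequality $w_G(x,y)\le s_G(x,y)$, i.e.\ $D_s\le D_w$, I would argue that for any $\widetilde{x}\in\widetilde{X}$ the midpoint $z_0=(x+\widetilde{x})/2\in\partial G$ is an admissible competitor in the infimum defining $D_s$. Because $z_0$ lies on the segment $[x,\widetilde{x}]$ with $|x-z_0|=|z_0-\widetilde{x}|=d_G(x)$, the triangle inequality gives $|x-z_0|+|z_0-y|\le d_G(x)+(|z_0-\widetilde{x}|+|\widetilde{x}-y|)=2d_G(x)+\dots$; more cleanly, since $z_0$ is the midpoint, $|z_0-y|\le|z_0-\widetilde x|+|\widetilde x-y|$ is not quite what I want, so instead I would observe directly that $|x-z_0|+|z_0-y|$ with $z_0$ the midpoint of $[x,\widetilde x]$ satisfies $|x-z_0|+|z_0-y|\le|x-\widetilde x|$ is false in general; the correct route is to note $D_s\le|x-z_0|+|z_0-y|$ and that $\inf_{\widetilde{x}}|y-\widetilde{x}|$ (resp.\ $\inf_{\widetilde{y}}|x-\widetilde{y}|$) is realized along a reflected path through $\partial G$, so $D_s$ never exceeds the reflected distance $D_w$. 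Concretely, a point on $S^{n-1}(x,2d_G(x))$ whose midpoint with $x$ lies on $\partial G$ is a reflection of $x$ across a supporting hyperplane of $G$, and the straight segment from such a reflected point $\widetilde{x}$ to $y$ crosses $\partial G$ at some $z$ with $|x-z|+|z-y|=|y-\widetilde x|$; taking the infimum gives $D_s\le D_w$.

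For the right inequality $s_G(x,y)\le\sqrt{2}\,w_G(x,y)$, i.e.\ $D_w\le\sqrt{2}\,D_s$, I would instead bound $D_s$ from below by combining Lemma \ref{lem_jsp_bounds}(3), which for convex $G$ gives $s_G\le\sqrt{2}\,j^*_G$, with Proposition \ref{prop_jw_con}, which gives $j^*_G\le w_G$. Chaining these two yields $s_G(x,y)\le\sqrt{2}\,j^*_G(x,y)\le\sqrt{2}\,w_G(x,y)$ immediately, so the right-hand inequality is essentially free once the two earlier results are invoked. This is the painless half of the theorem.

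The main obstacle is the left inequality, specifically making rigorous the claim that each reflected point $\widetilde{x}\in\widetilde X$ furnishes a boundary competitor realizing $D_s\le|y-\widetilde x|$. The delicate point is that $\widetilde X$ may contain several reflections (one for each supporting hyperplane touching $S^{n-1}(x,d_G(x))\cap\partial G$), and I must check that the segment $[y,\widetilde x]$ genuinely meets $\partial G$ at a point $z$ for which $|x-z|=|z-\widetilde x|_{\text{reflected}}$ so that $|x-z|+|z-y|=|y-\widetilde x|$; convexity is exactly what guarantees such a crossing point exists and that reflection across a supporting hyperplane is distance-preserving for the $x$-to-boundary leg. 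Once this reflection identity is established for every element of $\widetilde X$ and symmetrically for $\widetilde Y$, taking the minimum over both families yields $D_s\le D_w$ and completes the proof.
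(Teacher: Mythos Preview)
Your treatment of the right-hand inequality $s_G\le\sqrt{2}\,w_G$ is correct and is exactly what the paper does: combine Lemma~\ref{lem_jsp_bounds}(3) with Proposition~\ref{prop_jw_con}.

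For the left-hand inequality your overall plan is right---use the supporting hyperplane $H$ at the nearest boundary point $u=(x+\widetilde{x})/2$ and a crossing point on $[y,\widetilde{x}]$---but the key identity you assert is false. You claim that the segment $[y,\widetilde{x}]$ meets $\partial G$ at a point $z$ with $|x-z|+|z-y|=|y-\widetilde{x}|$. That equality would require $|x-z|=|z-\widetilde{x}|$, i.e.\ $z\in H$, and there is no reason the boundary crossing should land on the supporting hyperplane; in general $\partial G$ curves away from $H$ and $z$ lies strictly on the $G$-side of $H$. (Conversely, the point $z'=[y,\widetilde{x}]\cap H$ does satisfy $|x-z'|+|z'-y|=|y-\widetilde{x}|$ by the reflection, but $z'$ is typically not in $\partial G$, so it is not an admissible competitor for $D_s$.)

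The fix, which is precisely the paper's argument, is to replace the equality by an inequality. Take $z\in[y,\widetilde{x}]\cap\partial G$; convexity forces $\overline{G}$ to lie in the closed half-space of $H$ containing $x$, so $z$ lies on the $x$-side of $H$ and therefore $|x-z|\le|z-\widetilde{x}|$. Since $z$ is on the segment $[y,\widetilde{x}]$ one has $|z-\widetilde{x}|+|z-y|=|y-\widetilde{x}|$, and combining gives
\[
D_s\le|x-z|+|z-y|\le|z-\widetilde{x}|+|z-y|=|y-\widetilde{x}|.
\]
Taking the infimum over $\widetilde{x}\in\widetilde{X}$ (and symmetrically over $\widetilde{Y}$) yields $D_s\le D_w$. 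So your ``reflection identity'' should be a reflection \emph{inequality}, and the content of convexity is not that the boundary leg is preserved but that it can only shrink.
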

\begin{proof}
Choose any distinct $x,y\in G$. By symmetry, we can suppose that $\inf_{\widetilde{x}\in\widetilde{X}}|y-\widetilde{x}|\leq\inf_{\widetilde{y}\in\widetilde{Y}}|x-\widetilde{y}|$. Fix $\widetilde{x}$ as the point that gives this smaller infimum. Let us only consider the two-dimensional plane containing $x,y,\widetilde{x}$ and set $n=2$. Fix $u=[x,\widetilde{x}]\cap\partial G$ and $z=[y,\widetilde{x}]\cap\partial G$. Position the domain $G$ on the upper half-plane so that the real axis is the tangent of $S^1(x,d_G(x))$ at the point $u$. Since $G$ is convex, it must be a subset of $\uhp^2$ and therefore $z\in\uhp^2\cup\R$. Thus, it follows that $|x-z|\leq|z-\widetilde{x}|$. Consequently,
\begin{align*}
w_G(x,y)=\frac{|x-y|}{|y-\widetilde{x}|}
=\frac{|x-y|}{|z-\widetilde{x}|+|z-y|}
\leq\frac{|x-y|}{|x-z|+|z-y|}
\leq s_G(x,y).
\end{align*}
The inequality $s_G(x,y)\leq\sqrt{2}w_G(x,y)$ follows from Lemma \ref{lem_jsp_bounds}(3) and Proposition \ref{prop_jw_con}.
\end{proof}

Now, we can show that the function $w_G$ is a quasi-metric.

\begin{corollary}\label{cor_wquasi}
For an arbitrary convex domain $G\subsetneq\R^n$, the function $w_G$ is a quasi-metric with a constant less than or equal to $\sqrt{2}$, and the number $\sqrt{2}$ here is sharp.
\end{corollary}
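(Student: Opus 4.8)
The plan is to read off all three defining properties of a quasi-metric from the sandwich estimate of Theorem \ref{thm_swconvex}, using that the triangular ratio metric $s_G$ is a genuine metric, and then to extract sharpness from the example already computed. First I would dispose of positivity and symmetry, both of which are immediate from Definition \ref{def_wcon}. The numerator $|x-y|$ is nonnegative and vanishes exactly when $x=y$, while for a convex $G$ the denominator $\min\{\inf_{\widetilde{y}\in\widetilde{Y}}|x-\widetilde{y}|,\inf_{\widetilde{x}\in\widetilde{X}}|y-\widetilde{x}|\}$ is strictly positive, as noted immediately after the definition (the reflected points lie outside $G$, so no infimum collapses). Hence $w_G(x,y)\geq 0$ with equality iff $x=y$. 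Symmetry holds because interchanging $x$ and $y$ merely swaps the two infima inside the minimum and leaves $|x-y|$ fixed.

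The core step is the relaxed triangle inequality. Fixing $x,y,z\in G$ and combining the two halves of Theorem \ref{thm_swconvex} with the ordinary triangle inequality for the metric $s_G$, I would chain
\begin{align*}
w_G(x,y)\leq s_G(x,y)\leq s_G(x,z)+s_G(z,y)\leq\sqrt{2}\,w_G(x,z)+\sqrt{2}\,w_G(z,y).
\end{align*}
This is precisely inequality \eqref{quasi_inequality} with $c=\sqrt{2}$, so $w_G$ is a quasi-metric with constant at most $\sqrt{2}$.

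For sharpness I would invoke Example \ref{ex_forw}, where the convex rectangle $G$ and the points $x,y,z$ depending on $k$ satisfy
\begin{align*}
\lim_{k\to 0^+}\frac{w_G(x,y)}{w_G(x,z)+w_G(z,y)}=\sqrt{2}.
\end{align*}
Thus for every $c<\sqrt{2}$ one may pick $k$ small enough that $w_G(x,y)>c\,(w_G(x,z)+w_G(z,y))$, so no constant below $\sqrt{2}$ can serve in \eqref{quasi_inequality} uniformly over convex domains. Hence $\sqrt{2}$ is optimal.

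I do not anticipate a genuine obstacle, since the substantive work is already carried out in Theorem \ref{thm_swconvex} (whose upper bound rests on Lemma \ref{lem_jsp_bounds}(3) and Proposition \ref{prop_jw_con}) and in Example \ref{ex_forw}. The only point meriting a word of care is conceptual rather than computational: sharpness is a claim about the \emph{entire class} of convex domains, not about one fixed $G$, so it is logically correct that a single witnessing domain suffices to exclude any smaller universal constant, and Example \ref{ex_forw} provides exactly such a witness.
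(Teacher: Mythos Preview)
Your proposal is correct and follows essentially the same route as the paper: chain $w_G\leq s_G\leq s_G+s_G\leq\sqrt{2}(w_G+w_G)$ via Theorem \ref{thm_swconvex} and the metric property of $s_G$, then invoke Example \ref{ex_forw} for sharpness. The paper omits the positivity and symmetry checks as trivial, but otherwise the arguments coincide.
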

\begin{proof}
It follows from Theorem \ref{thm_swconvex} and the fact that the triangular ratio metric is always a metric that
\begin{align*}
w_G(x,y)\leq s_G(x,y)\leq s_G(x,z)+s_G(z,y)\leq\sqrt{2}(w_G(x,z)+w_G(z,y))   
\end{align*}
 and, by Example \ref{ex_forw}, the constant $\sqrt{2}$ here is the best one possible for an arbitrary convex domain.
\end{proof}

We will also have the following result.

\begin{corollary}\label{cor_jwsp_convex}
For any convex domain $G\subsetneq\R^n$ and all $x,y\in G$,
\begin{align*}
j^*_G(x,y)\leq w_G(x,y)\leq s_G(x,y)\leq p_G(x,y).    
\end{align*}
\end{corollary}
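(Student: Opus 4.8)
The final statement to prove is Corollary \ref{cor_jwsp_convex}, which chains together four inequalities for any convex domain. My plan is to assemble it directly from results already established earlier in the excerpt, since each of the three sub-inequalities has essentially been proved in isolation.

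First I would observe that the leftmost inequality $j^*_G(x,y)\leq w_G(x,y)$ is precisely the content of Proposition \ref{prop_jw_con}, valid in any convex domain. Next, the middle inequality $w_G(x,y)\leq s_G(x,y)$ is the lower bound furnished by Theorem \ref{thm_swconvex}, again under the convexity hypothesis. The only remaining link is the rightmost inequality $s_G(x,y)\leq p_G(x,y)$, which holds for convex domains by the forward direction of Theorem \ref{sp_convex} (the characterization stating that $G$ is convex if and only if $s_G\leq p_G$ everywhere). Concatenating these three gives the full chain.

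Concretely, I would write a short proof that simply cites these three results in sequence. Something like: by Proposition \ref{prop_jw_con} we have $j^*_G(x,y)\leq w_G(x,y)$; by Theorem \ref{thm_swconvex} we have $w_G(x,y)\leq s_G(x,y)$; and since $G$ is convex, Theorem \ref{sp_convex} yields $s_G(x,y)\leq p_G(x,y)$. Combining these establishes the claimed chain of inequalities for all $x,y\in G$.

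I do not anticipate any genuine obstacle here, as the corollary is purely a bookkeeping assembly of prior lemmas. The only point requiring minor care is to confirm that every cited result carries the convexity assumption in a compatible way: Proposition \ref{prop_jw_con} and Theorem \ref{thm_swconvex} are both stated for convex $G$, and Theorem \ref{sp_convex} supplies the inequality $s_G\leq p_G$ exactly under convexity, so all four quantities are simultaneously controlled on the same domain. Thus the proof reduces to invoking Proposition \ref{prop_jw_con}, Theorem \ref{thm_swconvex}, and Theorem \ref{sp_convex} and stringing their conclusions together.
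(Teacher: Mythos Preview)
Your proposal is correct and matches the paper's own proof essentially verbatim: the paper likewise derives the chain by citing Proposition \ref{prop_jw_con}, Theorem \ref{thm_swconvex}, and Theorem \ref{sp_convex}. There is nothing to add.
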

\begin{proof}
Follows from Proposition \ref{prop_jw_con} and Theorems \ref{thm_swconvex} and \ref{sp_convex}.
\end{proof}

Next, in order to summarize our results found above, let us yet write Lemma \ref{rhojsp_inG} with the quasi-metric $w_G$.

\begin{corollary}\label{cor_wrho}
\emph{\cite[p. 460]{hkvbook}} For all $x,y\in G\in\{\uhp^n,\B^n\}$,
\begin{align*}
&(1)\quad{\rm th}\frac{\rho_{\uhp^n}(x,y)}{4}\leq j^*_{\uhp^n}(x,y)\leq w_{\uhp^n}(x,y)=s_{\uhp^n}(x,y)= p_{\uhp^n}(x,y)={\rm th}\frac{\rho_{\uhp^n}(x,y)}{2},\\
&(2)\quad{\rm th}\frac{\rho_{\B^n}(x,y)}{4}\leq j^*_{\B^n}(x,y)\leq w_{\B^n}(x,y)\leq s_{\B^n}(x,y)\leq p_{\B^n}(x,y)\leq{\rm th}\frac{\rho_{\B^n}(x,y)}{2}. 
\end{align*}
\end{corollary}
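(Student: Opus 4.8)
The plan is to recognize that this corollary carries essentially no new content beyond Lemma \ref{rhojsp_inG}: it merely refines the two chains of inequalities already established there by inserting the quasi-metric $w_G$ into the correct position. The only structural observation needed is that both $\uhp^n$ and $\B^n$ are convex domains, so that the convex-domain results of this section (Proposition \ref{prop_jw_con}, Theorem \ref{thm_swconvex}, Corollary \ref{cor_jwsp_convex}) and Proposition \ref{prop_wsp_inH} all apply. I would therefore treat the proof as an assembly of quoted facts rather than a computation.

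For part (1), I would begin from Lemma \ref{rhojsp_inG}(1), which already supplies the outer bound ${\rm th}(\rho_{\uhp^n}/4)\leq j^*_{\uhp^n}$ and the terminal equalities $s_{\uhp^n}=p_{\uhp^n}={\rm th}(\rho_{\uhp^n}/2)$. The single new ingredient is Proposition \ref{prop_wsp_inH}, which gives $w_{\uhp^n}(x,y)=s_{\uhp^n}(x,y)=p_{\uhp^n}(x,y)$. Inserting this identity between $j^*_{\uhp^n}$ and $s_{\uhp^n}$ produces the full displayed chain; the step $j^*_{\uhp^n}\leq w_{\uhp^n}$ is then immediate, since $w_{\uhp^n}=s_{\uhp^n}$ and Lemma \ref{rhojsp_inG}(1) already gives $j^*_{\uhp^n}\leq s_{\uhp^n}$.

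For part (2), I would take the chain ${\rm th}(\rho_{\B^n}/4)\leq j^*_{\B^n}\leq s_{\B^n}\leq p_{\B^n}\leq{\rm th}(\rho_{\B^n}/2)$ from Lemma \ref{rhojsp_inG}(2) and refine its interior by means of Corollary \ref{cor_jwsp_convex}. Because $\B^n$ is convex, that corollary yields $j^*_{\B^n}(x,y)\leq w_{\B^n}(x,y)\leq s_{\B^n}(x,y)$, so replacing the single step $j^*_{\B^n}\leq s_{\B^n}$ by this two-step refinement gives exactly the asserted inequalities.

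Since every inequality and equality is quoted verbatim from results proved above, there is no genuine obstacle. The only point requiring any care is the half-space case: there one must invoke the sharp equalities of Proposition \ref{prop_wsp_inH} rather than the weaker inequality $w_{\uhp^n}\leq s_{\uhp^n}\leq p_{\uhp^n}$ coming from Corollary \ref{cor_jwsp_convex}, in order to recover the full collapse $w_{\uhp^n}=s_{\uhp^n}=p_{\uhp^n}={\rm th}(\rho_{\uhp^n}/2)$ displayed in part (1).
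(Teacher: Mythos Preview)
Your proposal is correct and matches the paper's own proof, which simply states that the corollary follows from Lemma \ref{rhojsp_inG}, Proposition \ref{prop_wsp_inH}, and Corollary \ref{cor_jwsp_convex}. You have merely unpacked in detail how these three cited results slot together, which is exactly the intended argument.
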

\begin{proof}
Follows from Lemma \ref{rhojsp_inG}, Proposition \ref{prop_wsp_inH} and Corollary \ref{cor_jwsp_convex}.
\end{proof}

\section{Quasi-metrics in the unit disk}\label{sct5}

In this section, we will focus on the inequalities between the hyperbolic type metrics and quasi-metrics in the case of the unit disk. Calculating the exact value of the triangular ratio metric in the unit disk is not a trivial task, but instead quite a difficult problem with a very long history, see \cite{fhmv} for more details. However, we already know from Corollary \ref{cor_jwsp_convex} that the quasi-metric $w_G$ serves as a lower bound for the triangular ratio metric in convex domains $G$ and this helps us considerably.

\begin{remark}\label{rmk_b2Intobn}
Note that while we focus below mostly on the unit disk $\B^2$, all the inequalities can be extended to the general case with the unit ball $\B^n$, because the values of the metrics and quasi-metrics considered only depend on how the points $x,y$ are located on the two-dimensional place containing them and the origin.
\end{remark}

First, we will define the function $w_G$ of Definition \ref{def_wcon} in the case $G=\B^n$. Denote below $\widetilde{x}=x(2-|x|)\slash|x|$ for all points $x\in\B^n\backslash\{0\}$. We will have the following results.

\begin{proposition}\label{prop_tildedist}
If $x,y\in\B^n\backslash\{0\}$ such that $|y|\leq|x|$, then $|y-\widetilde{x}|\leq|x-\widetilde{y}|$.
\end{proposition}
\begin{proof}
Let $\mu=\measuredangle XOY$. Note that $|\widetilde{x}|=2-|x|$ and $|\widetilde{y}|=2-|y|$. By the law of cosines,
\begin{align*}
&|y-\widetilde{x}|\leq|x-\widetilde{y}|\\
\Leftrightarrow\quad
&\sqrt{|y|^2+(2-|x|)^2-2|y|(2-|x|)\cos(\mu)}\leq\sqrt{|x|^2+(2-|y|)^2-2|x|(2-|y|)\cos(\mu)}\\
\Leftrightarrow\quad
&|y|^2-(2-|y|)^2-|x|^2+(2-|x|)^2+2|x|(2-|y|)\cos(\mu)-2|y|(2-|x|)\cos(\mu)\leq0\\
\Leftrightarrow\quad
&4|y|+4-4|x|-4+2(|x|(2-|y|)-|y|(2-|x|))\cos(\mu)\leq0\\
\Leftrightarrow\quad
&4(|y|-|x|)+4(|x|-|y|)\cos(\mu)\leq0\quad
\Leftrightarrow\quad
(|y|-|x|)(1-\cos(\mu))\leq0\\
\Leftrightarrow\quad
&|y|\leq|x|,
\end{align*}
which proves the result.
\end{proof}

\begin{proposition}\label{prop_contiw}
If $x\in\B^n\backslash\{0\}$ is fixed and $y\to0$, then
\begin{align*}
\frac{|x-y|}{|y-\widetilde{x}|}\to
\frac{|x|}{2-|x|}.    
\end{align*}
\end{proposition}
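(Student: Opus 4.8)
The plan is to treat this as a routine continuity computation, exploiting the fact that once $x$ is fixed the point $\widetilde{x}=x(2-|x|)\slash|x|$ is also fixed and does not vary with $y$. Both maps $y\mapsto|x-y|$ and $y\mapsto|y-\widetilde{x}|$ are continuous on all of $\R^n$, so I would evaluate the limits of the numerator and denominator separately as $y\to0$ and then combine them, the only precaution being to check that the denominator's limit is nonzero.

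First I would record the two separate limits. As $y\to0$ we have $|x-y|\to|x|$ and $|y-\widetilde{x}|\to|\widetilde{x}|$. The only quantity needing a short calculation is $|\widetilde{x}|$: since $2-|x|>0$ for every $x\in\B^n$ (because $|x|<1<2$), the scalar $(2-|x|)\slash|x|$ is positive, and therefore
\[
|\widetilde{x}|=\frac{2-|x|}{|x|}\,|x|=2-|x|.
\]

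Next I would justify passing the limit through the quotient. The limiting denominator is $|\widetilde{x}|=2-|x|\geq2-1=1>0$, hence bounded away from zero, so the limit of the quotient equals the quotient of the limits, giving
\[
\frac{|x-y|}{|y-\widetilde{x}|}\longrightarrow\frac{|x|}{|\widetilde{x}|}=\frac{|x|}{2-|x|},
\]
which is the claimed limit. There is no genuine obstacle here: the statement is essentially the continuity of the expression $|x-y|\slash|y-\widetilde{x}|$ at $y=0$, and the single point requiring care is verifying that the limiting denominator $2-|x|$ is strictly positive, so that the division by $|y-\widetilde{x}|$ remains well-defined in the limit.
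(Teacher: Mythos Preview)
Your proof is correct. Both you and the paper compute the limit directly, but the methods differ in presentation: the paper writes $|x-y|$ and $|y-\widetilde{x}|$ explicitly via the law of cosines in terms of $|x|$, $|y|$, and $\mu=\measuredangle XOY$, and then lets $|y|\to0^+$ inside the resulting radical expressions. Your argument is more economical: you observe that $\widetilde{x}$ is fixed once $x$ is, invoke continuity of the Euclidean norm, and check that the limiting denominator $|\widetilde{x}|=2-|x|$ is strictly positive. This sidesteps the explicit trigonometric computation and also avoids the minor awkwardness that the angle $\mu$ is not well defined at $y=0$ (in the paper's version this is harmless because every $\cos(\mu)$ term carries a factor of $|y|$, but your formulation never raises the issue).
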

\begin{proof}
By writing $\mu=\measuredangle XOY$ and using the law of cosines,
\begin{align*}
\lim_{|y|\to0^+}\frac{|x-y|}{|y-\widetilde{x}|}
=\lim_{|y|\to0^+}\sqrt{\frac{|x|^2+|y|^2-2|x||y|\cos(\mu)}{|y|^2+(2-|x|)^2-2|y|(2-|x|)\cos(\mu)}}
=\sqrt{\frac{|x|^2}{(2-|x|)^2}}=\frac{|x|}{2-|x|}.
\end{align*}
\end{proof}

Now, consider the following result. 

\begin{proposition}\label{prop_winB}
In the domain $G=\B^n$, the quasi-metric $w_G$ is a function $w_{\B^n}:\B^n\times\B^n\to[0,1]$,
\begin{align*}
w_{\B^n}(x,y)&=\frac{|x-y|}{\min\{|x-\widetilde{y}|,|y-\widetilde{x}|\}},
\quad x,y\in\B^n\backslash\{0\},\\
w_{\B^n}(x,0)&=\frac{|x|}{2-|x|},\quad x\in\B^n,
\end{align*}
where $\widetilde{x}=x(2-|x|)\slash|x|$ and $\widetilde{y}=y(2-|y|)\slash|y|$.
\end{proposition}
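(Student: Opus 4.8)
The plan is to unwind Definition \ref{def_wcon} in the special case $G=\B^n$ by identifying the auxiliary sets $\widetilde{X}$ and $\widetilde{Y}$ explicitly and then substituting them into the formula for $w_G$. Since $d_{\B^n}(x)=1-|x|$, the crux of the matter is to show that for every $x\in\B^n\setminus\{0\}$ the set $\widetilde{X}$ consists of the single point $x(2-|x|)\slash|x|$; once this is established, both displayed formulas follow by direct computation.

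To pin down $\widetilde{X}$ for a fixed $x\neq0$, I would describe a generic candidate $\widetilde{x}$ through its midpoint $m=(x+\widetilde{x})\slash2$, so that $\widetilde{x}=2m-x$. The two defining conditions, namely $m\in\partial\B^n$ and $|x-\widetilde{x}|=2d_{\B^n}(x)$, then translate into $|m|=1$ together with $|m-x|=1-|x|$. The key observation is that these force $m=x\slash|x|$: writing $m=x+(m-x)$ gives $1=|m|\leq|x|+|m-x|=|x|+(1-|x|)=1$, so equality holds throughout the triangle inequality, whence $m-x$ is a nonnegative multiple of $x$, and the normalization $|m|=1$ leaves only $m=x\slash|x|$. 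Substituting back yields $\widetilde{x}=2x\slash|x|-x=x(2-|x|)\slash|x|$, and a short check confirms that this point indeed satisfies $|m|=1$ and $|x-\widetilde{x}|=2(1-|x|)$, so $\widetilde{X}$ is exactly this singleton and in particular non-empty. I expect this equality-case argument to be the main, though modest, obstacle, the only subtlety being that it requires $x\neq0$ so that the direction $x\slash|x|$ is well defined.

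With $\widetilde{X}=\{x(2-|x|)\slash|x|\}$ and, by the same reasoning, $\widetilde{Y}=\{y(2-|y|)\slash|y|\}$ for $x,y\in\B^n\setminus\{0\}$, the two infima in the denominator of $w_G$ collapse to single distances and the first displayed formula is immediate. For the boundary case $y=0$, where $d_{\B^n}(0)=1$, the direction argument breaks down: the condition defining $\widetilde{Y}$ reduces to $|\widetilde{y}|=2$, so $\widetilde{Y}$ is the whole sphere $S^{n-1}(0,2)$. I would then evaluate the two competing infima separately, using that the nearest point of $S^{n-1}(0,2)$ to $x$ lies at distance $2-|x|$ while $|0-\widetilde{x}|=|\widetilde{x}|=2-|x|$ by the first part. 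Their minimum is $2-|x|$, and dividing $|x|$ by it gives $w_{\B^n}(x,0)=|x|\slash(2-|x|)$, which completes the proof.
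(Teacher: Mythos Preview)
Your proposal is correct and follows the same approach as the paper, namely unwinding Definition~\ref{def_wcon} in the ball; the paper's own proof is the single line ``Follows from~\ref{def_wcon},'' so your argument simply supplies the details that the paper leaves implicit. In particular, your treatment of the case $y=0$---observing that $\widetilde{Y}=S^{n-1}(0,2)$ and computing both infima to be $2-|x|$---is more explicit than anything in the paper, which seems to rely tacitly on Proposition~\ref{prop_contiw} (the limit $y\to0$) to motivate the second formula.
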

\begin{proof}
Follows from \ref{def_wcon}.
\end{proof}

Consider also the next corollary, which follows directly from the proposition above and our earlier observations.

\begin{corollary}\label{cor_winBdef}
For all distinct points $x,y\in\B^n$ such that $0\leq|y|\leq|x|<1$,
\begin{align*}
w_{\B^n}(x,y)=\frac{|x-y|}{|y-\widetilde{x}|}.
\end{align*}
\end{corollary}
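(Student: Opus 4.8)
The goal is to prove Corollary~\ref{cor_winBdef}: for distinct points $x,y\in\B^n$ with $0\leq|y|\leq|x|<1$, one has $w_{\B^n}(x,y)=|x-y|\slash|y-\widetilde{x}|$. My plan is to split the argument into the two cases distinguished by the definition of $w_{\B^n}$ in Proposition~\ref{prop_winB}, namely $y\neq 0$ and $y=0$, and in each case show that the minimum in the denominator is attained by the term $|y-\widetilde{x}|$.

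First I would treat the generic case $0<|y|\leq|x|<1$, where both $\widetilde{x}$ and $\widetilde{y}$ are well defined. Here Proposition~\ref{prop_winB} gives
\begin{align*}
w_{\B^n}(x,y)=\frac{|x-y|}{\min\{|x-\widetilde{y}|,|y-\widetilde{x}|\}},
\end{align*}
so it suffices to show that the minimum equals $|y-\widetilde{x}|$, i.e. that $|y-\widetilde{x}|\leq|x-\widetilde{y}|$. But this is exactly the content of Proposition~\ref{prop_tildedist}, which under the hypothesis $|y|\leq|x|$ establishes precisely this inequality. Hence in this case the claimed formula follows immediately.

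Next I would handle the boundary case $y=0$ (which is permitted since the hypothesis reads $0\leq|y|$). When $y=0$ the point $\widetilde{y}$ is undefined, so Proposition~\ref{prop_tildedist} does not apply directly and the formula from Proposition~\ref{prop_winB} must be taken in its second form, $w_{\B^n}(x,0)=|x|\slash(2-|x|)$. To reconcile this with the target expression $|x-y|\slash|y-\widetilde{x}|$ evaluated at $y=0$, I would simply compute the right-hand side directly: with $y=0$ we have $|x-y|=|x|$ and $|y-\widetilde{x}|=|\widetilde{x}|=2-|x|$, since $|\widetilde{x}|=|x|(2-|x|)\slash|x|=2-|x|$. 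Thus $|x-0|\slash|0-\widetilde{x}|=|x|\slash(2-|x|)$, which matches the value of $w_{\B^n}(x,0)$ given in Proposition~\ref{prop_winB}. This is also consistent with the limiting computation in Proposition~\ref{prop_contiw}, so the formula extends continuously to $y=0$.

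The argument is essentially a case-check that stitches together the earlier propositions, so there is no serious analytic obstacle. The only point requiring care is recognizing that the single formula $|x-y|\slash|y-\widetilde{x}|$ must be verified to reproduce both branches of the piecewise definition in Proposition~\ref{prop_winB}, and in particular that it remains valid in the degenerate case $y=0$ where $\widetilde{y}$ does not exist. The mild subtlety is therefore bookkeeping about which of $\widetilde{x},\widetilde{y}$ is defined rather than any substantive inequality, since the decisive inequality $|y-\widetilde{x}|\leq|x-\widetilde{y}|$ is already delivered by Proposition~\ref{prop_tildedist}.
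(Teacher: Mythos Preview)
Your proposal is correct and follows essentially the same approach as the paper: both split into the cases $y=0$ and $y\neq 0$, invoke Proposition~\ref{prop_tildedist} for the latter to identify the minimum in Proposition~\ref{prop_winB}, and compute $|0-\widetilde{x}|=2-|x|$ for the former. The paper additionally makes explicit that $x\neq 0$ (since $x\neq y$ and $|y|\leq|x|$), which you use implicitly when writing $|\widetilde{x}|=|x|(2-|x|)\slash|x|$.
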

\begin{proof}
Note that $0\leq|y|\leq|x|$ and $x\neq y$, so $x\neq 0$. If $y=0$, the result follows from Proposition \ref{prop_winB} because $|0-\widetilde{x}|=2-|x|$. If $x,y\in\B^2\backslash\{0\}$ instead, the result holds by Propositions \ref{prop_winB} and \ref{prop_tildedist}.
\end{proof}

It follows from this and Proposition \ref{prop_contiw} that the function $w_{\B^n}$ defined as in Proposition \ref{prop_winB} is continuous. By Corollary \ref{cor_wquasi}, the function $w_{\B^n}$ is also at least a quasi-metric. In fact, according to the numerical tests, the function $w_{\B^n}$ seems to fulfill the triangle inequality in the unit ball, which would mean that the following conjecture holds.  

\begin{conjecture}
The function $w_{\B^n}$ is a metric on the unit ball.
\end{conjecture}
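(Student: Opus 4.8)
The function $w_{\B^n}$ is non-negative, vanishes exactly when $x=y$, and is symmetric directly from the formula in Proposition \ref{prop_winB}, so the entire content of the conjecture is the triangle inequality
$$ w_{\B^n}(x,y) \le w_{\B^n}(x,z) + w_{\B^n}(z,y), \qquad x,y,z \in \B^n. $$
My first move is a dimension reduction. Given three points, the linear subspace $V = \mathrm{span}\{x,y,z\}$ satisfies $\dim V \le 3$, contains the origin, and contains each radial reflection $\widetilde{x},\widetilde{y},\widetilde{z}$. Since $d_{\B^n}(\,\cdot\,)=1-|\cdot|$, the nearest boundary point $x/|x|$, and hence $\widetilde{x}$, are computed intrinsically inside $V\cap\B^n$, which is a unit ball of dimension $\dim V$; consequently $w_{\B^n}$ restricted to $V$ coincides with $w_{\B^{\dim V}}$. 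It therefore suffices to prove the triangle inequality in $\B^3$. Because $w_{\B^n}$ depends only on $|x|$, $|y|$ and the angle $\measuredangle XOY$ (and the analogous data for the other pairs), it is invariant under the orthogonal group fixing the origin, which I would use to normalise the configuration, for instance placing $z$ on a coordinate axis and $x$ in a coordinate plane.

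Next I would make the minima explicit. By Proposition \ref{prop_tildedist}, for a pair of points the inner minimum defining $w_{\B^n}$ is attained by the reflection of the point of larger norm; ordering $|x|,|y|,|z|$ thus determines each pairwise denominator, and expanding every distance by the law of cosines writes the inequality purely in terms of the three norms and the three mutual angles. The plan is then to fix $x$ and $y$ and study the function $z\mapsto w_{\B^n}(x,z)+w_{\B^n}(z,y)$, showing its infimum over $z\in\B^3$ is at least $w_{\B^n}(x,y)$. A critical-point (Lagrange) analysis should confine the minimising $z$ to a low-dimensional locus; by analogy with Heron's problem one expects the optimal $z$ to make the relevant reflected images $\widetilde{x},\widetilde{y}$ collinear with $z$, thereby collapsing the problem to an inequality in one or two real parameters that can be verified directly.

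The main obstacle is exactly this final step. Unlike the quasi-metric bound of Corollary \ref{cor_wquasi}, which follows cheaply from $w_{\B^n}\le s_{\B^n}\le\sqrt{2}\,w_{\B^n}$ (Theorem \ref{thm_swconvex}) together with the metric property of $s_{\B^n}$, the sharp constant $1$ requires controlling the non-smooth minima and the genuinely three-dimensional geometry simultaneously, with no closed form for the optimal $z$ available. This difficulty is to be expected: the triangular ratio metric $s_{\B^n}$ itself has no elementary closed form (see \cite{fhmv}), while the chain $w_{\B^n}\le s_{\B^n}\le\sqrt{2}\,w_{\B^n}$, which degenerates to equality throughout in the half-space (Proposition \ref{prop_wsp_inH}), shows that $w_{\B^n}$ hugs the metric $s_{\B^n}$ closely, making a full metric proof plausible but delicate. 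A complementary strategy worth pursuing is comparison: for each fixed pair one would seek an auxiliary genuine metric $\delta$ with $w_{\B^n}\le\delta$ on the region swept by the candidate intermediate points and $\delta=w_{\B^n}$ on the pair in question, so that the triangle inequality for $\delta$ transfers to $w_{\B^n}$; the half-space identity suggests modelling $\delta$ on the half-space Heron reflection that $w_{\B^n}$ approximates.
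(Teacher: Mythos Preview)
The statement you attempted is stated in the paper as a \emph{conjecture}, not a theorem; the paper explicitly leaves it open, noting only that numerical tests suggest the triangle inequality holds and that whether $w_{\B^n}$ is a genuine metric or merely a quasi-metric does not affect the subsequent results. There is therefore no proof in the paper to compare against.

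Your write-up is not a proof either, and you say as much: after the dimension reduction and the use of Proposition~\ref{prop_tildedist} to pin down the minima, you arrive at a constrained minimisation in several real variables and concede that the ``final step'' --- locating the optimal intermediate point $z$ and verifying the resulting one- or two-parameter inequality --- is the obstacle you do not know how to overcome. The auxiliary-metric comparison idea at the end is likewise only a heuristic. So the honest status of your proposal is that it records some correct preliminary reductions (rotational invariance, reduction to $\B^3$, identification of which reflection realises each minimum) but does not contain the key idea that would settle the conjecture; in particular, the Heron-type collinearity you expect for the minimiser is asserted by analogy, not established. This matches the state of affairs in the paper: the problem is open.
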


However, it does not affect the results of this paper if the function $w_{\B^n}$ truly is a metric or just a quasi-metric, so let us move on and show that the function $w_{\B^2}$ is quite a good lower bound for the triangular ratio metric in the unit disk.

\begin{theorem}\label{thm_ws}
For all $x,y\in\B^2$, $w_{\B^2}(x,y)\leq s_{\B^2}(x,y)$ and the equality holds here whenever $x,y$ are collinear with the origin.
\end{theorem}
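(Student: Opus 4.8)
The plan is to obtain the inequality for free from the general convex-domain theory and then to settle the equality by a direct one-dimensional computation. Since $\B^2$ is convex, Corollary~\ref{cor_jwsp_convex} already gives $w_{\B^2}(x,y)\le s_{\B^2}(x,y)$ for all $x,y\in\B^2$; thus the only part that genuinely requires work is the equality statement for points collinear with the origin.

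To handle the equality, I would first reduce to a one-dimensional situation. Both $w_{\B^2}$ and $s_{\B^2}$ are symmetric in their two arguments and invariant under rotations about the origin (Remark~\ref{rmk_invariantmetrics}), so I may assume that $x,y$ lie on the real axis with $|y|\le|x|$, say $x=ae_1$ and $y=\varepsilon b e_1$ with $0\le b\le a<1$ and $\varepsilon\in\{+1,-1\}$. On the $s$-side, the hypothesis of collinearity is exactly the equality case of Lemma~\ref{lem_smetricinB_collinear}, so $s_{\B^2}(x,y)=|x-y|/(2-|x+y|)$. On the $w$-side, since $|y|\le|x|$ I may use Corollary~\ref{cor_winBdef}, which gives $w_{\B^2}(x,y)=|x-y|/|y-\widetilde{x}|$ with $\widetilde{x}=x(2-|x|)/|x|=(2-a)e_1$; the key structural point is that $\widetilde{x}$ is a positive multiple of $x$ and hence remains on the real axis, so the whole computation stays on the line through the origin.

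It then remains to match the two denominators. With everything on the axis, $|x+y|=|a+\varepsilon b|$, so $2-|x+y|$ equals $2-a-b$ when $\varepsilon=+1$ and $2-a+b$ when $\varepsilon=-1$; a direct evaluation of $|y-\widetilde{x}|=|\varepsilon b-(2-a)|$ yields the same two values, using $a+b<2$ in the first case and $a-b<2$ in the second. The degenerate case $y=0$ is covered separately by the second formula in Proposition~\ref{prop_winB}, where both sides equal $|x|/(2-|x|)$. I do not expect a genuine obstacle here: the inequality is an immediate corollary of the convex case, and the equality is a short explicit computation whose only delicate point is the correct resolution of the absolute value $|y-\widetilde{x}|$ across the same-ray and opposite-ray cases.
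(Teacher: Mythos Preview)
Your proposal is correct and follows essentially the same route as the paper: the inequality is inherited from the convex case (the paper cites Corollary~\ref{cor_wrho}(2), you cite Corollary~\ref{cor_jwsp_convex}; both yield $w_{\B^2}\le s_{\B^2}$), and the equality is obtained by rotating to the real axis, applying Lemma~\ref{lem_smetricinB_collinear} for $s_{\B^2}$ and Corollary~\ref{cor_winBdef} for $w_{\B^2}$, and matching the denominators. Your explicit split into the same-ray and opposite-ray cases and the separate treatment of $y=0$ are slightly more detailed than the paper's single computation, but the argument is the same.
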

\begin{proof}
The inequality follows from Corollary \ref{cor_wrho}(2). If the points $x,y\in\B^2$ are collinear with the origin, we can fix $x,y\in\R$ so that $0<-x<y\leq x<1$ without loss of generality. In the special case $x=y$, the equality holds trivially and, if $x\neq y$, by Lemma \ref{lem_smetricinB_collinear} and Corollary \ref{cor_winBdef},
\begin{align*}
s_{\B^2}(x,y)
=\frac{|x-y|}{2-|x+y|}
=\frac{x-y}{2-(x+y)}
=\frac{|x-y|}{|y-(2-x)|}
=\frac{|x-y|}{|y-\widetilde{x}|}
=w_{\B^2}(x,y).
\end{align*}
\end{proof}

By \cite[Lemma 3.12, p. 7]{sinb}, the following function is a lower bound for the triangular ratio metric.

\begin{definition}\cite[Def. 3.9, p. 7]{sinb}
For $x,y\in\B^2\backslash\{0\}$, define
\begin{align*}
\text{low}(x,y)=\frac{|x-y|}{\min\{|x-y^*|,|x^*-y|\}},  \end{align*}
where $x^*=x\slash|x|^2$ and $y^*=y\slash|y|^2$.
\end{definition}

However the quasi-metric $w_{\B^2}$ is a better lower bound for the triangular ratio metric in the unit disk than this low-function, as we will show below. 

\begin{lemma}
For all distinct points $x,y\in\B^2\backslash\{0\}$, $w_{\B^2}(x,y)>{\rm low}(x,y)$.
\end{lemma}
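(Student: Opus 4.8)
The plan is to exploit the fact that $w_{\B^2}(x,y)$ and ${\rm low}(x,y)$ share the same positive numerator $|x-y|$, so that the claimed strict inequality is equivalent to the reverse strict inequality between the two denominators,
\begin{align*}
\min\{|x-\widetilde{y}|,|y-\widetilde{x}|\}<\min\{|x-y^*|,|y-x^*|\}.
\end{align*}
Since $\min\{a_1,a_2\}<\min\{b_1,b_2\}$ whenever $a_1<b_1$ and $a_2<b_2$, it suffices to establish the two term-wise estimates $|y-\widetilde{x}|<|y-x^*|$ and $|x-\widetilde{y}|<|x-y^*|$. These are interchanged by swapping the roles of $x$ and $y$, so only the first needs a separate argument.

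The geometric input I would use is that $\widetilde{x}$ and $x^*$ lie on the \emph{same} ray from the origin through $x$, at radii $|\widetilde{x}|=2-|x|$ and $|x^*|=1/|x|$. Writing $t=|x|\in(0,1)$, the identity $1/t-(2-t)=(1-t)^2/t>0$ shows $|x^*|>|\widetilde{x}|>1$, so both points lie outside $\B^2$ and $\widetilde{x}$ is strictly between $x$ and $x^*$ along the ray. Because $\widetilde{x}$ and $x^*$ share the direction of $x$, the angle each makes with $y$ at the origin is the common value $\mu=\measuredangle XOY$, and so the law of cosines (as in Proposition \ref{prop_tildedist}) gives
\begin{align*}
|y-\widetilde{x}|^2-|y-x^*|^2
=\Big((2-|x|)-\tfrac{1}{|x|}\Big)\Big((2-|x|)+\tfrac{1}{|x|}-2|y|\cos\mu\Big).
\end{align*}
The first factor equals $-(1-|x|)^2/|x|<0$, while the second factor satisfies $(2-|x|)+1/|x|-2|y|\cos\mu>2-2|y|>0$, using $(2-|x|)+1/|x|>2$ (equivalent to $|x|<1$) together with $|y|<1$ and $\cos\mu\leq 1$. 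Hence the product is strictly negative and $|y-\widetilde{x}|<|y-x^*|$.

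Running the identical computation with $x$ and $y$ interchanged yields $|x-\widetilde{y}|<|x-y^*|$, and combining the two term-wise inequalities through the elementary fact about minima produces the strict denominator inequality, hence $w_{\B^2}(x,y)>{\rm low}(x,y)$. I do not expect a serious obstacle here: the only points requiring care are the bookkeeping in the reduction to term-wise comparisons (matching $|y-\widetilde{x}|$ against $|y-x^*|$, not against $|x-y^*|$) and checking that the second law-of-cosines factor remains strictly positive for \emph{every} admissible $\mu$, including the obtuse range where $\cos\mu<0$ and the bound only improves.
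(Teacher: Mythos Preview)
Your proof is correct and follows essentially the same approach as the paper: both reduce the claim to the term-wise comparison $|x-\widetilde{y}|<|x-y^*|$ (and its symmetric twin) by observing that $\widetilde{y}$ and $y^*$ lie on the same ray through the origin with $1<|\widetilde{y}|<|y^*|$. The only difference is cosmetic: where the paper argues geometrically that $k\mapsto|x-ke_1|$ is strictly increasing for $k>1$, you instead factor $|y-\widetilde{x}|^2-|y-x^*|^2$ via the law of cosines and check the signs of the two factors directly.
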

\begin{proof}
Let $x\in\B^2$, $k>1$ and $\mu$ be the value of the larger angle between line $L(1,x)$ and the real axis. Now,
\begin{align*}
|x-ke_1|=\sqrt{|x-1|^2+(k-1)^2-2|x-1|(k-1)\cos(\mu)}.    
\end{align*}
Here, $\cos(\mu)<0$ since $\pi\slash2<\mu\leq\pi$. Thus, we see that the distance $|x-ke_1|$ is strictly increasing with respect to $k$. In other words, the further away a point $k\in\R^2\backslash\overline{\B}^2$ is from the origin, the longer the distance between $k$ and an arbitrary point $x\in\B^2$ is. For every point $y\in\B^2\backslash\{0\}$,
\begin{align*}
1<2-|y|<\frac{1}{|y|}\quad\Leftrightarrow\quad
1<\left|\frac{y(2-|y|)}{|y|}\right|<\left|\frac{y}{|y|^2}\right|
\quad\Leftrightarrow\quad
1<|\widetilde{y}|<|y^*|,    
\end{align*}
so it follows by the observation made above that, for all $x\in\B^2$,
\begin{align*}
|x-\widetilde{y}|<|x-y^*|.    
\end{align*}
Consequently, by symmetry, the inequality
\begin{align*}
w_{\B^2}(x,y)=\frac{|x-y|}{\min\{|x-\widetilde{y}|,|y-\widetilde{x}|\}}>\frac{|x-y|}{\min\{|x-y^*|,|y-x^*|\}}
=\text{low}(x,y)  
\end{align*}
holds for all distinct points $x,y\in\B^2\backslash\{0\}$.
\end{proof}

Next, we will prove one sharp inequality between the two quasi-metrics considered this paper. 

\begin{theorem}\label{thm_wp}
For all points $x,y\in\B^2$,
\begin{align*}
w_{\B^2}(x,y)\leq p_{\B^2}(x,y)\leq\sqrt{2}w_{\B^2}(x,y),    
\end{align*}
where the equality $w_{\B^2}(x,y)=p_{\B^2}(x,y)$ holds whenever $x,y$ are on the same ray starting from the origin, and $p_{\B^2}(x,y)=\sqrt{2}w_{\B^2}(x,y)$ holds when $x=-y$ and $|x|=|y|=1\slash2$.
\end{theorem}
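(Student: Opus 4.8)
The plan is to establish the two inequalities $w_{\B^2}(x,y)\leq p_{\B^2}(x,y)$ and $p_{\B^2}(x,y)\leq\sqrt{2}\,w_{\B^2}(x,y)$ separately, since their equality cases are very different in character. For the left-hand inequality, I would invoke Corollary \ref{cor_wrho}(2), which already gives $w_{\B^2}(x,y)\leq s_{\B^2}(x,y)\leq p_{\B^2}(x,y)$ directly. For its equality case, Theorem \ref{thm_ws} shows $w_{\B^2}(x,y)=s_{\B^2}(x,y)$ when $x,y$ are collinear with the origin; it then suffices to check $s_{\B^2}(x,y)=p_{\B^2}(x,y)$ on a common ray. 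I would set $x=re^{i\alpha}$, $y=te^{i\alpha}$ with $0<t\le r<1$, whence $|x-y|=r-t$, and compute $s_{\B^2}$ via Lemma \ref{lem_smetricinB_collinear} (equality holds for collinear points) against the explicit $p_{\B^2}$ formula, reducing to the identity $(2-(r+t))^2 = (r-t)^2 + 4(1-r)(1-t)$, which expands to a routine verification.

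For the right-hand inequality $p_{\B^2}(x,y)\leq\sqrt{2}\,w_{\B^2}(x,y)$, the natural route is to combine Corollary \ref{cor_jwsp_convex} (giving $s_{\B^2}\le p_{\B^2}$ and $w_{\B^2}\le s_{\B^2}$) with the convex-domain bound $s_{\B^2}(x,y)\le\sqrt{2}\,j^*_{\B^2}(x,y)$ from Lemma \ref{lem_jsp_bounds}(3) together with $j^*_{\B^2}\le w_{\B^2}$ from Proposition \ref{prop_jw_con}. However, this chain only yields $s_{\B^2}\le\sqrt{2}\,w_{\B^2}$, and one still needs $p_{\B^2}\le\sqrt{2}\,w_{\B^2}$ rather than $s_{\B^2}$ on the left. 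A cleaner approach is to use $p_{\B^2}(x,y)\le\sqrt{2}\,j^*_{\B^2}(x,y)$ from Lemma \ref{lem_jsp_bounds}(1), and then chain with Proposition \ref{prop_jw_con}:
\begin{align*}
p_{\B^2}(x,y)\leq\sqrt{2}\,j^*_{\B^2}(x,y)\leq\sqrt{2}\,w_{\B^2}(x,y),
\end{align*}
which gives the desired bound immediately and is my preferred line of argument.

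The main obstacle is the sharpness claim that $p_{\B^2}(x,y)=\sqrt{2}\,w_{\B^2}(x,y)$ holds exactly when $x=-y$ with $|x|=|y|=1/2$. Here I would substitute the explicit point $x=e_1/2$, $y=-e_1/2$ directly. For the point pair function, $|x-y|=1$ and $d_{\B^2}(x)=d_{\B^2}(y)=1/2$, so $p_{\B^2}(x,y)=1/\sqrt{1+4\cdot\frac14}=1/\sqrt{2}$. For $w_{\B^2}$, since $x,y,0$ are collinear the formula from Corollary \ref{cor_winBdef} applies with $\widetilde{x}=x(2-|x|)/|x|=(3/2)e_1$, giving $|y-\widetilde{x}|=|{-1/2}-3/2|=2$, so $w_{\B^2}(x,y)=1/2$. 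Then $p_{\B^2}/w_{\B^2}=(1/\sqrt{2})/(1/2)=\sqrt{2}$, confirming the equality. Establishing that this is the \emph{only} configuration attaining $\sqrt{2}$ would require tracing the equality conditions back through Lemma \ref{lem_jsp_bounds}(1) — where $p_G=\sqrt{2}\,j^*_G$ forces $|x-y|=2\min\{d_G(x),d_G(y)\}$ with the two boundary distances balanced — and through Proposition \ref{prop_jw_con}, whose triangle-inequality step is tight only in the collinear, equidistant case; I expect this uniqueness analysis to be the most delicate part, though the statement as phrased only asserts that equality \emph{holds} at this point rather than demanding a full characterization.
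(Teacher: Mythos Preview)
Your proposal is correct and, for the upper bound, considerably more economical than the paper's own argument. The paper establishes $p_{\B^2}\le\sqrt{2}\,w_{\B^2}$ by a direct optimization: it writes the ratio $p_{\B^2}(x,y)/w_{\B^2}(x,y)$ via the law of cosines as a function of $|x|,|y|$ and the angle $\mu=\measuredangle XOY$, shows by differentiation that this ratio is monotone decreasing in $\cos\mu$, reduces to the antipodal configuration $x=h$, $y=-h+k$, bounds the resulting expression by its $k=0$ value $1/\sqrt{2h^2-2h+1}$, and finally maximizes over $h$ to obtain $\sqrt{2}$ at $h=1/2$. Your chain $p_{\B^2}\le\sqrt{2}\,j^*_{\B^2}\le\sqrt{2}\,w_{\B^2}$ from Lemma~\ref{lem_jsp_bounds}(1) and Proposition~\ref{prop_jw_con} achieves the same inequality in one line, and your direct substitution at $x=-y=e_1/2$ confirms the claimed equality. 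The trade-off is that the paper's computation, as a by-product, actually locates the global maximizer of the ratio and shows it is unique (up to rotation), whereas your route only certifies that $\sqrt{2}$ is attained at that point; since the theorem as stated asks for no more than this, your argument is complete. For the left-hand equality case your detour through $s_{\B^2}$ via Theorem~\ref{thm_ws} and Lemma~\ref{lem_smetricinB_collinear} is correct but slightly roundabout: the paper simply places $0\le y<x<1$ on the real axis and checks $w_{\B^2}(x,y)=\frac{x-y}{2-x-y}=p_{\B^2}(x,y)$ by the same algebraic identity $(2-x-y)^2=(x-y)^2+4(1-x)(1-y)$ that you invoke, without passing through $s_{\B^2}$.
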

\begin{proof}
The first inequality follows from Corollary \ref{cor_wrho}(2). If $x=y$, the equality $w_{\B^2}(x,y)=p_{\B^2}(x,y)=0$ is trivial. Thus, if $x,y\in\B^2$ are on the same ray, we can now fix $0\leq y<x<1$ without loss of generality. Now, by Corollary \ref{cor_winBdef},
\begin{align*}
w_{\B^2}(x,y)=\frac{x-y}{2-x-y}=\frac{x-y}{\sqrt{(x-y)^2+4(1-x)(1-y)}}=p_{\B^2}(x,y).
\end{align*}

Next, let us prove the latter part of the inequality. We need to find that the maximum of the quotient
\begin{align}\label{quo_pw}
\frac{p_{\B^2}(x,y)}{w_{\B^2}(x,y)}=\frac{\min\{|x-\widetilde{y}|,|y-\widetilde{x}|\}}{\sqrt{|x-y|^2+4(1-|x|)(1-|y|)}}.    
\end{align}
In order to do that, we can suppose without loss of generality that $x,y$ are on different rays starting from the origin since, as we proved above, the equality $w_{\B^2}(x,y)=p_{\B^2}(x,y)$ holds otherwise. Choose these points so that $0<|y|\leq|x|<1$ and $\mu=\measuredangle XOY>0$. It follows from Corollary \ref{cor_winBdef} that the quotient \eqref{quo_pw} is now
\begin{align}
\frac{p_{\B^2}(x,y)}{w_{\B^2}(x,y)}
&=\frac{|y-\widetilde{x}|}{\sqrt{|x-y|^2+4(1-|x|)(1-|y|)}}\nonumber\\
&=\sqrt{\frac{|y|^2+(2-|x|)^2-2|y|(2-|x|)\cos(\mu)}{|x|^2+|y|^2-2|x||y|\cos(\mu)+4-4|x|-4|y|+4|x||y|}}\nonumber\\
&=\sqrt{\frac{|y|^2+(2-|x|)^2-2|y|(2-|x|)\cos(\mu)}{|y|^2+(2-|x|)^2-4|y|(1-|x|)-2|x||y|\cos(\mu)}}.\label{quo_squareroot_pw}
\end{align}
Fix now
\begin{align*}
j&=\cos(\mu),\quad
s=|y|^2+(2-|x|)^2,\quad  
t=2|y|(2-|x|),\\
u&=|y|^2+(2-|x|)^2-4|y|(1-|x|),\quad
v=2|x||y|,  
\end{align*}
so that the argument of the square root in the expression \eqref{quo_squareroot_pw} can be described with a function $f:[0,1]\to\R$,
\begin{align*}
f(j)=\frac{s-tj}{u-vj}.    
\end{align*}
By differentiation, the function $f$ is decreasing with respect to $j$, if and only if
\begin{align*}
f'(j)=\frac{-t(u-vj)+v(s-tj)}{(u-vj)^2}=\frac{sv-tu}{(u-vj)^2}\leq0
\quad\Leftrightarrow\quad
sv-tu\leq0.
\end{align*}
Since this last inequality is equivalent to
\begin{align*}
&(|y|^2+(2-|x|)^2)2|x||y|-2|y|(2-|x|)(|y|^2+(2-|x|)^2-4|y|(1-|x|))\leq0\\
\Leftrightarrow\quad&(|y|^2+(2-|x|)^2)|x|-(2-|x|)(|y|^2+(2-|x|)^2-4|y|(1-|x|))\leq0\\
\Leftrightarrow\quad&|y|^2|x|+|x|(2-|x|)^2-|y|^2(2-|x|)-(2-|x|)^3+4|y|(1-|x|)(2-|x|)\leq0\\
\Leftrightarrow\quad&-2|y|^2(1-|x|)-2(2-|x|)^2(1-|x|)+4|y|(1-|x|)(2-|x|)\leq0\\
\Leftrightarrow\quad&|y|^2+(2-|x|)^2-2|y|(2-|x|)\geq0\\
\Leftrightarrow\quad&(|y|-(2-|x|))^2=(2-|x|-|y|)^2\geq0,
\end{align*}
which clearly holds, it follows that the function $f$ and the quotient \eqref{quo_pw} are decreasing with respect to $j=\cos(\mu)$. The minimum value of $\cos(\mu)$ is $-1$ at $\mu=\pi$. Consequently, we can fix the points $x,y$ so that $x=h$ and $y=-h+k$ with $0\leq k<h<1$, without loss of generality. Now, the quotient \eqref{quo_pw} is
\begin{align*}
\frac{p_{\B^2}(x,y)}{w_{\B^2}(x,y)}
&=\frac{2-k}{\sqrt{(2h+k)^2+4(1-h)(1-h+k)}}
=\sqrt{\frac{4-4k+k^2}{8h^2-8h+k^2+4k+4}}\\
&\leq\sqrt{\frac{4-4k+k^2+k(4-k)}{8h^2-8h+k^2+4k+4-k(4+k)}}
=\sqrt{\frac{4}{8h^2-8h+4}}
=\frac{1}{\sqrt{2h^2-2h+1}}.
\end{align*}
This upper bound found above is the value of the quotient \eqref{quo_pw} in the case $k=0$, because, for $x=h$ and $y=-h$,
\begin{align*}
\frac{p_{\B^2}(x,y)}{w_{\B^2}(x,y)}
=\frac{2}{\sqrt{4h^2+4(1-h)^2}}
=\frac{1}{\sqrt{2h^2-2h+1}}.
\end{align*}
The expression $2h^2-2h+1$ obtains its minimum value $1\slash2$ at $h=1\slash2$, so the maximum value of the quotient \eqref{quo_pw} is $\sqrt{2}$.
\end{proof}

The next result follows.

\begin{theorem}\label{thm_wjinB}
For all $x,y\in\B^2$,
\begin{align*}
j^*_{\B^2}(x,y)\leq w_{\B^2}(x,y)\leq\sqrt{2}j^*_{\B^2}(x,y),    
\end{align*}
where the equality $j^*_{\B^2}(x,y)=w_{\B^2}(x,y)$ holds whenever $x,y$ are on the same ray starting from the origin and the constant $\sqrt{2}$ is the best possible one.
\end{theorem}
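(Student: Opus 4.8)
The plan is to obtain both inequalities by combining results already available, and then to settle the two equality/sharpness claims by direct computation. For the chain itself, the left-hand inequality $j^*_{\B^2}(x,y)\le w_{\B^2}(x,y)$ is immediate from Proposition \ref{prop_jw_con}, since $\B^2$ is convex. For the right-hand inequality I would simply compose the first estimate of Theorem \ref{thm_wp}, namely $w_{\B^2}(x,y)\le p_{\B^2}(x,y)$, with the bound $p_{\B^2}(x,y)\le\sqrt{2}\,j^*_{\B^2}(x,y)$ of Lemma \ref{lem_jsp_bounds}(1); together these give $w_{\B^2}(x,y)\le p_{\B^2}(x,y)\le\sqrt{2}\,j^*_{\B^2}(x,y)$. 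So no new estimate is needed for the inequalities themselves.

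For the equality $j^*_{\B^2}(x,y)=w_{\B^2}(x,y)$ when $x,y$ lie on a common ray from the origin, I would use similarity-invariance to place $0\le y<x<1$ on the positive real axis. Then $\min\{d_{\B^2}(x),d_{\B^2}(y)\}=1-x$, so $j^*_{\B^2}(x,y)=(x-y)/(2-x-y)$; and since $\widetilde{x}=(2-x)e_1$, Corollary \ref{cor_winBdef} gives $w_{\B^2}(x,y)=(x-y)/|y-\widetilde{x}|=(x-y)/(2-x-y)$ as well. The two expressions coincide.

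For the sharpness of the constant $\sqrt{2}$, the plan is to exhibit a family of point pairs along which $w_{\B^2}/j^*_{\B^2}\to\sqrt{2}$; since the proved inequality already forces the ratio to stay below $\sqrt{2}$, such a family suffices. I would take $x,y$ with $|x|=|y|=a$ and $\measuredangle XOY=\mu$ chosen so that $\sin(\mu/2)=(1-a)/a$, which is admissible precisely when $a\ge 1/2$. Then $|x-y|=2a\sin(\mu/2)=2(1-a)$ while $d_{\B^2}(x)=d_{\B^2}(y)=1-a$, so $j^*_{\B^2}(x,y)=1/2$ identically. A law-of-cosines computation of $|y-\widetilde{x}|$, using $|\widetilde{x}|=2-a$ and $\cos\mu=1-2\sin^2(\mu/2)$, yields $|y-\widetilde{x}|^2=8(1-a)^2/a$, whence $w_{\B^2}(x,y)=2(1-a)/|y-\widetilde{x}|=\sqrt{a/2}$ and therefore $w_{\B^2}(x,y)/j^*_{\B^2}(x,y)=\sqrt{2a}$. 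Letting $a\to 1^-$ drives this ratio to $\sqrt{2}$, proving that no smaller constant works.

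The inequalities and the on-ray equality are routine once the earlier results are invoked; the only genuine work is the sharpness computation. The main obstacle there is identifying the correct extremal configuration: equal moduli with the angle tuned so that the two boundary distances coincide and $|x-y|$ equals twice that common distance — which is exactly the configuration forcing $p_{\B^2}=\sqrt{2}\,j^*_{\B^2}$ in Lemma \ref{lem_jsp_bounds}(1) — together with the observation that $w_{\B^2}$ stays close to $p_{\B^2}$ (indeed $w_{\B^2}/p_{\B^2}=\sqrt{a}\to 1$) as the points approach the boundary, so that the two separate equality mechanisms can be made to act simultaneously in the limit.
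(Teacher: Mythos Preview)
Your argument is correct and follows the paper's proof almost verbatim for the two inequalities and the on-ray equality: the paper likewise derives $j^*_{\B^2}\le w_{\B^2}$ from the general convex-domain bound and $w_{\B^2}\le\sqrt{2}\,j^*_{\B^2}$ by chaining $w_{\B^2}\le p_{\B^2}$ from Theorem~\ref{thm_wp} with $p_{\B^2}\le\sqrt{2}\,j^*_{\B^2}$ from Lemma~\ref{lem_jsp_bounds}(1), and the equality computation on a ray is identical.

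The only genuine difference is in the sharpness witness. The paper takes $x=1-k$, $y=(1-k)e^{2ki}$ and computes the ratio
\[
\frac{w_{\B^2}(x,y)}{j^*_{\B^2}(x,y)}=\frac{(1-k)\sin(k)+k}{\sqrt{k^2+(1-k^2)\sin^2(k)}}\xrightarrow[k\to0^+]{}\sqrt{2},
\]
which requires a small asymptotic check. Your family is a close relative (equal moduli, shrinking angle, points drifting to the boundary) but with the angle tuned so that $|x-y|=2d_{\B^2}(x)$ exactly; this buys you the clean closed forms $j^*_{\B^2}=\tfrac12$ and $w_{\B^2}=\sqrt{a/2}$, whence the ratio $\sqrt{2a}$ and the limit $\sqrt{2}$ drop out without any trigonometric limit. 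Your variant is slightly more transparent, but the underlying extremal configuration is the same.
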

\begin{proof}
The inequality $j^*_{\B^2}(x,y)\leq w_{\B^2}(x,y)$ follows from Corollary \ref{cor_wrho}(2) and the inequality $w_{\B^2}(x,y)\leq\sqrt{2}j^*_{\B^2}(x,y)$ from Lemma \ref{lem_jsp_bounds}(1) and Theorem \ref{thm_wp}. If $x,y\in\B^2$ are on the same ray, we can suppose without loss of generality that $0\leq y\leq x<1$. If now $x=y$, the equality $j^*_{\B^2}(x,y)=w_{\B^2}(x,y)=0$ is trivial, and if $x\neq y$ instead, by Corollary \ref{cor_winBdef},
\begin{align*}
w_{\B^2}(x,y)=\frac{x-y}{2-x-y}=\frac{x-y}{x-y+2(1-x)}=j^*_{\B^2}(x,y).    
\end{align*}
Fix yet $x=1-k$ and $y=(1-k)e^{2ki}$ with $0<k<1$. By the law of cosines and the cosine double-angle formula,
\begin{align}
\frac{w_{\B^2}(x,y)}{j^*_{\B^2}(x,y)}
&=\frac{(1-k)|1-e^{2ki}|+2k}{|(1-k)e^{2ki}-(1+k)|}
=\frac{2(1-k)\sin(k)+2k}{\sqrt{2+2k^2-2(1-k^2)\cos(2k)}}\nonumber\\
&=\frac{(1-k)\sin(k)+k}{\sqrt{k^2+(1-k^2)\sin^2(k)}}.\label{quo_jw}
\end{align}
Since the quotient \eqref{quo_jw} has a limit value of $\sqrt{2}$ when $k\to0^+$, the final part of the theorem follows.
\end{proof}

Let us next focus on how the quasi-metric $w_{\B^2}$ can be used to create an upper bound for the triangular ratio metric. We know from Theorem \ref{thm_swconvex} that in the general case where the  domain $G$ is convex, the inequality $s_G(x,y)\leq\sqrt{2}w_G(x,y)$ holds. Thus, this must also hold in the unit disk, but several numerical tests suggest that the constant $\sqrt{2}$ is not necessarily the best possible when $G=\B^2$. The next result tells the best constant in a certain special case.

\begin{lemma}\label{lem_sw_specialcase}
For all $x,y\in\B^2$ such that $|x|=|y|$ and $\measuredangle XOY=\pi\slash2$,
\begin{align*}
s_{\B^2}(x,y)\leq c\cdot w_{\B^2}(x,y)\quad\text{with}\quad 
c=\sqrt{\frac{h_0^2-2h_0+2}{2h_0^2-2\sqrt{2}h_0+2}},\quad
h_0=\frac{1-\sqrt{9-6\sqrt{2}}}{2-\sqrt{2}}.
\end{align*}
\end{lemma}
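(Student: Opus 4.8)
The plan is to exploit the rotational symmetry of the disk to reduce to a pair of complex-conjugate points, and then to perform a one-variable optimization. Both $s_{\B^2}$ and $w_{\B^2}$ are invariant under rotations about the origin (for $s$ this is Remark \ref{rmk_invariantmetrics}; for $w$ it follows since the assignment $x\mapsto\widetilde{x}=x(2-|x|)/|x|$ commutes with rotations and $|x-y|$ is rotation invariant). Hence I would write $r=|x|=|y|\in(0,1)$ and, using $\measuredangle XOY=\pi/2$, place the points symmetrically about the real axis as $x=re^{i\pi/4}$ and $y=\overline{x}=re^{-i\pi/4}$, so that $x=h+ki$ with $h=k=r/\sqrt{2}>0$ and $|x-y|=\sqrt{2}\,r$.

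First I would compute $w_{\B^2}(x,y)$. Since $|x|=|y|$, Corollary \ref{cor_winBdef} gives $w_{\B^2}(x,y)=|x-y|/|y-\widetilde{x}|$, where $|\widetilde{x}|=2-r$ and $\widetilde{x}$ points in the direction of $x$. As $\widetilde{x}$ and $y$ subtend an angle $\pi/2$ at the origin, Pythagoras yields $|y-\widetilde{x}|^2=r^2+(2-r)^2=2(r^2-2r+2)$, and therefore
\begin{align*}
w_{\B^2}(x,y)=\frac{r}{\sqrt{r^2-2r+2}}.
\end{align*}
Next I would evaluate $s_{\B^2}(x,y)=s_{\B^2}(x,\overline{x})$ via Theorem \ref{smetricinB_forconjugate}. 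The threshold condition $|x-\tfrac12|>\tfrac12$ simplifies to $r^2-r/\sqrt{2}>0$, i.e. $r>1/\sqrt{2}$, which splits the analysis into two cases. For $r\ge 1/\sqrt{2}$ one has $s_{\B^2}=r$, so $s_{\B^2}/w_{\B^2}=\sqrt{r^2-2r+2}$, which is decreasing on $(1/\sqrt{2},1)$ and hence bounded by its value $\sqrt{5/2-\sqrt{2}}$ at $r=1/\sqrt{2}$. For $r\le 1/\sqrt{2}$ one has $s_{\B^2}=(r/\sqrt{2})/\sqrt{r^2-\sqrt{2}\,r+1}$, and dividing by $w$ gives
\begin{align*}
\left(\frac{s_{\B^2}(x,y)}{w_{\B^2}(x,y)}\right)^2=g(r),\qquad g(r)=\frac{r^2-2r+2}{2r^2-2\sqrt{2}\,r+2}.
\end{align*}

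Finally I would maximize $g$ over $(0,1/\sqrt{2}]$. Differentiating and clearing the denominator, the condition $g'(r)=0$ reduces to the quadratic $(2-\sqrt{2})r^2-2r+(2\sqrt{2}-2)=0$, whose only root inside $(0,1/\sqrt{2})$ is $h_0=(1-\sqrt{9-6\sqrt{2}})/(2-\sqrt{2})$, the other root lying well outside $[0,1)$. Comparing $g(h_0)$ with the boundary values $g(0)$ and $g(1/\sqrt{2})$ shows that $h_0$ is a genuine interior maximum, and since $g(h_0)$ also exceeds the first-case bound $5/2-\sqrt{2}$, no configuration is missed; thus $c=\sqrt{g(h_0)}$ is exactly the stated constant. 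The main obstacle is this last step: one must differentiate the rational function $g$ carefully, confirm that the resulting quadratic has precisely the stated root $h_0$ in the admissible interval, and check that the interior maximum of the second case dominates the monotone first case, so that the supremum of $s_{\B^2}/w_{\B^2}$ over all admissible points is attained at $r=h_0$.
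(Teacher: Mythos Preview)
Your proposal is correct and follows essentially the same route as the paper: reduce by rotational invariance to a one-parameter family, compute $s_{\B^2}$ via Theorem~\ref{smetricinB_forconjugate} and $w_{\B^2}$ via Corollary~\ref{cor_winBdef}, split at $r=1/\sqrt{2}$, and maximize the rational function $g(r)=(r^2-2r+2)/(2r^2-2\sqrt{2}r+2)$ by finding the critical point $h_0$. The only cosmetic difference is that you place the pair as complex conjugates $re^{\pm i\pi/4}$ from the outset, whereas the paper uses $x=h$, $y=hi$ and implicitly rotates when invoking Theorem~\ref{smetricinB_forconjugate}; the computations and the optimization are otherwise identical.
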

\begin{proof}
Let $x=h$ and $y=hi$ for $0<h<1$. Because
\begin{align*}
|h-\frac{1}{2}e^{\pi i\slash4}|>\frac{1}{2}
\quad\Leftrightarrow\quad
|2\sqrt{2}h-1-i|>\sqrt{2}
\quad\Leftrightarrow\quad
h>\frac{1}{\sqrt{2}},
\end{align*}
it follows from Theorem \ref{smetricinB_forconjugate} that
\begin{align*}
s_{\B^2}(x,y)&=h,\quad\text{if}\quad h>\frac{1}{\sqrt{2}}\\
s_{\B^2}(x,y)&=\frac{h\slash\sqrt{2}}{\sqrt{(1-h\slash\sqrt{2})^2+h^2\slash2}}
=\frac{h}{\sqrt{2h^2-2\sqrt{2}h+2}}\quad\text{otherwise,}
\\
w_{\B^2}(x,y)&=\frac{\sqrt{2}h}{|hi-(2-h)|}
=\frac{\sqrt{2}h}{\sqrt{2h^2-4h+4}}
=\frac{h}{\sqrt{h^2-2h+2}}.
\end{align*}
Consequently, if $h>1\slash\sqrt{2}$,
\begin{align}\label{val_1.04}
\frac{s_{\B^2}(x,y)}{w_{\B^2}(x,y)}=\sqrt{h^2-2h+2}
<\sqrt{(1\slash\sqrt{2})^2-2(1\slash\sqrt{2})+2}
=\sqrt{5\slash2-\sqrt{2}}
\approx1.04201
\end{align}
and, if $h\leq1\slash\sqrt{2}$ instead,
\begin{align}\label{quo_swl}
\frac{s_{\B^2}(x,y)}{w_{\B^2}(x,y)}=\sqrt{\frac{h^2-2h+2}{2h^2-2\sqrt{2}h+2}}.
\end{align}
Next, define a function $f:(0,1\slash\sqrt{2}]\to\R$,
\begin{align*}
f(h)=\frac{h^2-2h+2}{2h^2-2\sqrt{2}h+2}.    
\end{align*}
By differentiation,
\begin{align*}
f'(h)&=\frac{(2h-2)(2h^2-2\sqrt{2}h+2)-(4h-2\sqrt{2})(h^2-2h+2)}{(2h^2-2\sqrt{2}h+2)^2}\\
&=\frac{2((2-\sqrt{2})h^2-2h+2\sqrt{2}-2)}{(2h^2-2\sqrt{2}h+2)^2}.
\end{align*}
By the quadratic formula, $f'(h)=0$ holds when
\begin{align*}
h=\frac{2\pm\sqrt{4-4(2-\sqrt{2})(2\sqrt{2}-2)}}{2(2-\sqrt{2})}
=\frac{1\pm\sqrt{9-6\sqrt{2}}}{2-\sqrt{2}}.
\end{align*}
Here, the $\pm$-symbol must be minus, so that $0<h\leq1\slash\sqrt{2}$. Fix
\begin{align*}
h_0=\frac{1-\sqrt{9-6\sqrt{2}}}{2-\sqrt{2}}\approx0.48236.   
\end{align*}
Since $f'(0.1)>0$ and $f'(0.7)<1$, the function $f$ obtains its local maximum of the interval $(0,1\slash\sqrt{2}]$ at $h_0$. Thus, $\sqrt{f(h_0)}$ is the maximum value of the quotient \eqref{quo_swl} within the limitation $h\leq1\slash\sqrt{2}$. Since
\begin{align}\label{val_1.07}
\sqrt{f(h_0)}=\sqrt{\frac{h_0^2-2h_0+2}{2h_0^2-2\sqrt{2}h_0+2}}\approx1.07313    
\end{align}
is clearly greater than the upper value \eqref{val_1.04} for this same quotient in the case $h>1\slash\sqrt{2}$, the value \eqref{val_1.07} is the maximum value of the quotient of the triangular ratio metric and the quasi-metric $w_{\B^2}$ in the general case $0<h<1$. Thus, the lemma follows. 
\end{proof}

Even though the inequality of Lemma \ref{lem_sw_specialcase} is proven for a very specific choice of points $x,y\in\B^2$, the result itself is still relevant because several numerical tests suggest that it holds more generally.

\begin{conjecture}\label{con_ws}
For all $x,y\in\B^2$, the inequality $s_{\B^2}(x,y)\leq c\cdot w_{\B^2}(x,y)$ holds with the sharp constant
\begin{align*}
c=\sqrt{\frac{h_0^2-2h_0+2}{2h_0^2-2\sqrt{2}h_0+2}}\approx1.07313,\quad
h_0=\frac{1-\sqrt{9-6\sqrt{2}}}{2-\sqrt{2}}.
\end{align*}
\end{conjecture}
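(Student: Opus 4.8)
The plan is to prove the inequality by reducing it, via the symmetries of the disk, to a three-parameter optimization and then showing that the extremal configuration is exactly the one already handled in Lemma \ref{lem_sw_specialcase}. First, by Remark \ref{rmk_b2Intobn} together with the rotational and reflectional invariance of both $s_{\B^2}$ and $w_{\B^2}$, I would assume without loss of generality that $x,y$ lie in a fixed plane with $0<|y|\le|x|<1$, and set $r_1=|x|$, $r_2=|y|$, $\mu=\measuredangle XOY\in[0,\pi]$. Corollary \ref{cor_winBdef} then yields the closed form
\[
w_{\B^2}(x,y)=\frac{|x-y|}{|y-\widetilde{x}|},\qquad |y-\widetilde{x}|^2=r_2^2+(2-r_1)^2-2r_2(2-r_1)\cos\mu,
\]
so that the quantity to be bounded is
\[
Q(r_1,r_2,\mu)=\frac{s_{\B^2}(x,y)}{w_{\B^2}(x,y)}=s_{\B^2}(x,y)\cdot\frac{|y-\widetilde{x}|}{|x-y|}.
\]
The objective is to show $\sup Q=c$, with the supremum attained at $r_1=r_2=h_0$ and $\mu=\pi/2$.

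The essential difficulty is that $s_{\B^2}$ has no elementary closed form once $r_1\ne r_2$. I would represent its denominator $\inf_{z\in\partial\B^2}(|x-z|+|z-y|)$ through the tangent-ellipse characterisation: the minimising boundary point $z^{\ast}$ is where the confocal ellipse with foci $x,y$ is internally tangent to the unit circle, equivalently where the radius through $z^{\ast}$ bisects the angle $\measuredangle x z^{\ast} y$ (the optical reflection condition). Imposing this condition produces a polynomial equation of degree four, whose relevant root exhibits $s_{\B^2}$ as an algebraic function of $(r_1,r_2,\mu)$. On the slice $r_1=r_2$ the two points are, after a rotation, complex conjugates, and there Theorem \ref{smetricinB_forconjugate} already supplies the explicit value exploited in Lemma \ref{lem_sw_specialcase}.

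With $s_{\B^2}$ available, I would optimise $Q$ in two stages. Note first that by Theorem \ref{thm_ws} the ratio equals $1$ at the collinear endpoints $\mu\in\{0,\pi\}$, so the maximising angle is interior, conjecturally $\mu=\pi/2$. In the first stage I fix $r_1,r_2$ and maximise over $\mu$, aiming to show the maximum occurs at $\mu=\pi/2$; this is the angular analogue of the rational-function monotonicity argument used for $p/w$ in the proof of Theorem \ref{thm_wp}, except that here $s_{\B^2}$ depends on $\mu$ through the quartic, so the sign of $\partial Q/\partial\mu$ must be extracted from the tangency relation rather than from an explicit derivative. In the second stage I restrict to $\mu=\pi/2$ and show the maximum over $r_1,r_2$ is attained on the diagonal $r_1=r_2$; the problem then collapses to the single-variable maximisation carried out in Lemma \ref{lem_sw_specialcase}, whose maximiser is $h_0$ and whose maximal value is exactly $c$.

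The main obstacle is precisely the control of $s_{\B^2}$ as the implicitly defined algebraic function above: establishing that $Q$ is globally maximised at $\mu=\pi/2$ and on $r_1=r_2$ requires sign information for the partial derivatives of an implicit function across the entire parameter region, which is what keeps the statement at the level of a conjecture. A realistic alternative route is to replace $s_{\B^2}$ by a tractable upper surrogate—for instance the bound $s_{\B^2}(x,y)\le|x-y|/(2-|x+y|)$ from Lemma \ref{lem_smetricinB_collinear}, or a piecewise estimate patched from Theorem \ref{smetricinB_forconjugate}—and to optimise that instead. For such a route to deliver the sharp constant $c$ rather than a weaker bound, the surrogate must be tight at the conjectured extremiser $\mu=\pi/2$, $r_1=r_2=h_0$; verifying this tangential sharpness while simultaneously keeping the surrogate above $s_{\B^2}$ everywhere is the delicate point.
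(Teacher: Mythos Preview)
The statement you are attempting to prove is labelled a \emph{Conjecture} in the paper, and the paper supplies no proof of it. What the paper does prove is the special case $|x|=|y|$, $\measuredangle XOY=\pi/2$ (Lemma~\ref{lem_sw_specialcase}), and then states the general inequality as Conjecture~\ref{con_ws}, supported only by numerical evidence (Figure~\ref{fig1}). So there is no proof in the paper to compare your proposal against.

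Your proposal is, in effect, an honest description of why the conjecture remains open rather than a proof of it. You correctly set up the reduction to the three parameters $(r_1,r_2,\mu)$ via Corollary~\ref{cor_winBdef}, and you correctly identify the obstruction: $s_{\B^2}$ is only available as a root of a degree-four polynomial in the generic case, and the two crucial steps of your plan---that the maximum in $\mu$ occurs at $\mu=\pi/2$, and that the maximum over $(r_1,r_2)$ with $\mu=\pi/2$ occurs on the diagonal $r_1=r_2$---are precisely the steps you do not carry out. You yourself note that extracting the sign of $\partial Q/\partial\mu$ from the tangency relation ``is what keeps the statement at the level of a conjecture.'' The surrogate route you sketch at the end also cannot succeed as stated: the bound $s_{\B^2}(x,y)\le|x-y|/(2-|x+y|)$ from Lemma~\ref{lem_smetricinB_collinear} is \emph{not} tight at the conjectured extremiser (there $x+y$ is not collinear with the origin in the relevant sense, and the equality case of that lemma requires collinearity), so optimising that surrogate would overshoot the constant~$c$.

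In short: the paper does not prove this statement, and neither does your proposal; what you have written is a reasonable outline of where the difficulty lies, but it is not a proof.
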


\begin{figure}
    \centering
    \scalebox{1}{\includegraphics{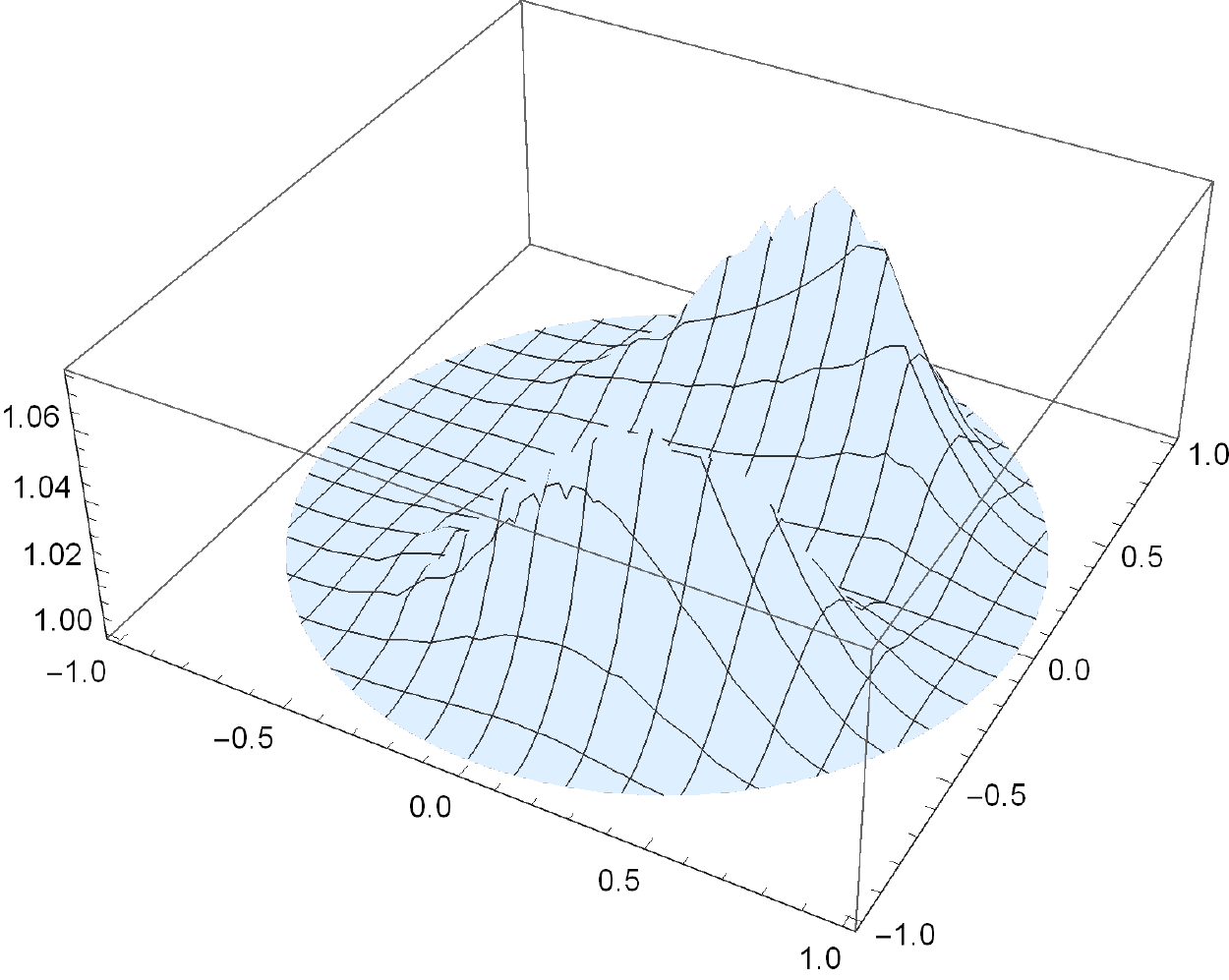}}
    \caption{Values of the quotient $s_{\B^2}(x,y)\slash w_{\B^2}(x,y)$ for different points $y\in\B^2$, when $x=0.6$ is fixed}
    \label{fig1}
\end{figure}

Figure \ref{fig1} also supports the assumption that the constant $c$ of Conjecture \ref{con_ws} is at most around 1.07. Consequently, the quasi-metric $w_{\B^2}$ is quite a good estimate for the triangular ratio metric in the unit disk. For instance, by choosing $c$ like above, we could use the value of $(c\slash2)\cdot w_{\B^2}(x,y)$ for the triangular ratio distance. Namely, if Conjecture \ref{con_ws} truly holds, our error with this estimate would be always less than $3.7$ percent.

\def\cprime{$'$} \def\cprime{$'$} \def\cprime{$'$}
\providecommand{\bysame}{\leavevmode\hbox to3em{\hrulefill}\thinspace}
\providecommand{\MR}{\relax\ifhmode\unskip\space\fi MR }
% \MRhref is called by the amsart/book/proc definition of \MR.
\providecommand{\MRhref}[2]{%
  \href{http://www.ams.org/mathscinet-getitem?mr=#1}{#2}
}
\providecommand{\href}[2]{#2}

\end{document}